\newtheorem{theorem}{Theorem}[section]
\newtheorem{lemma}[theorem]{Lemma}
\newtheorem{proposition}[theorem]{Proposition}
\theoremstyle{definition}
\newtheorem{remark}[theorem]{Remark}
\numberwithin{equation}{section}
\newcommand{\R}{\mathbb{R}}
\newcommand{\medint}{-\kern  -,395cm\int}
\newcommand{\medintdue}{-\kern  -,31cm\int}
\newcommand{\dist}{{\rm dist}}
\newcommand{\eps}{\varepsilon}
\newcommand{\vphi}{\varphi}
\newcommand{\F}{{\mathcal{F}}}
\newcommand{\Ha}{{\mathcal{H}}}
\newcommand{\E}{{\mathcal{E}}}
\newcommand{\D}{{\mathcal{D}}}
\newcommand{\diver}{\operatorname{div}}
\renewcommand{\MR}[1]{\null}
\begin{document}

\title[On the regularity of critical and minimal sets of a free interface problem]{On the regularity of critical and minimal sets of a free interface problem}

\author{Nicola Fusco}

\address{Dipartimento di Matematica e Applicazioni ``R. Cacciopoli'', Universit\`a degli Studi di Napoli ``Federico II'', Napoli, Italy}
\email{n.fusco@unina.it}

\author{Vesa Julin}
\address{Department of Mathematics and Statistics, University of Jyv\"askyl\"a, Finland}
\email{vesa.julin@jyu.fi}

\subjclass[2010]{49Q10, 49N60, 74G40}
\date{\today}

\begin{abstract}
We study a free interface problem of finding the optimal energy configuration for mixtures of two conducting materials with an additional  perimeter  penalization of the interface. We employ the regularity theory of linear elliptic equations to study the possible opening angles of  Taylor cones and  to give a different proof of a partial regularity result  by Fan Hua Lin \cite{Lin}.
\end{abstract}

\maketitle

\section{Introduction}

In this paper we consider the functional
\begin{equation}
\label{energy}
\F(E, v) = \gamma P(E, \Omega) +  \int_{\Omega} \sigma_E (x)|D v|^2 \, dx, 
\end{equation}
where $\gamma>0$,   $\Omega \subset \R^n$ is an open set, $v \in W^{1,2}(\Omega)$ and $P(E, \Omega)$ stands for the  perimeter of $E$ in $\Omega$. Moreover,  $\sigma_E(x)=  \beta \chi_{E} (x) + \alpha \chi_{\Omega \setminus E}(x)$, where $0 < \alpha < \beta  < \infty$ are given constants.

Given a function $u_0 \in W^{1,2}(\Omega)$ and a measurable $E \subset \Omega$, we denote by $u_E$, or simply by $u$ if no confusion arises, the corresponding elastic equilibrium, i.e., the minimizer in $W^{1,2}(\Omega)$ of the functional 
\[
  \int_{\Omega} \sigma_E (x)|D v|^2 \, dx 
\]
under the boundary condition $v = u_0$ on $\partial \Omega$. It follows that the function $u$ solves the linear equation
\begin{equation}
\label{linear_eq}
 \int_{\Omega} \langle \sigma_E D u , D \varphi \rangle \, dx = 0  \qquad \text{for every  }\, \varphi \in W_0^{1,2}( \Omega).
\end{equation}
If we denote by $u_{\beta}$ and $u_{\alpha}$ the restriction of $u$ on $E$ and  $\Omega \setminus E$, respectively, they are harmonic in their domains. Moreover, equation \eqref{linear_eq} implies  the  transmission condition 
\begin{equation}
\label{transition}
\alpha \partial_{\nu}u_{\alpha}(x)= \beta \partial_{\nu} u_{\beta}(x) \qquad \text{for all }\,  x \in \partial E \cap \Omega,
\end{equation}
where $\partial_{\nu}$ denotes the derivative of $u$ in the direction of the exterior normal to $\partial E$. Note that if $(E,u)$ is a smooth critical point of the functional \eqref{energy} the following Euler-Lagrange equation holds
\begin{equation}
\label{1st_var}
\gamma H_{\partial E} + \beta |D u_{\beta}|^2  - \alpha |D u_{\alpha}|^2 = \lambda \quad \text{on }\, \partial E \cap \Omega,
\end{equation}
where $ H_{\partial E}$ stands for the mean curvature of $\partial E$ and $\lambda$ is either zero or a Lagrange multiplier (in case of a volume constraint).

In the physical literature critical points of the functional \eqref{energy}, i.e, solutions of equations \eqref{linear_eq} and \eqref{1st_var} are studied to model the shape of liquid drops exposed to an electric or a magnetic field. In the model the set  $E$ represents a liquid drop with  dielectric permittivity $\beta$, surrounded by a  fluid with smaller permittivity $\alpha$, and $u$ stands for the electrostatic potential induced by an applied electric field. At the interface, which is assumed to be in static equilibrium, the normal component of  the electric displacement field $\sigma_E D u$ has to be continuous. This implies that $u$ has to satisfy \eqref{transition} or equivalently that it has to be  a solution of equation \eqref{linear_eq}.  On the other hand,  on the interface   the electric stress and the surface tension has to be balanced, which leads to \eqref{1st_var}. 

The occurrence of conical tips at an interface exposed to an electric field has been observed by several authors (see e.g. \cite{Z}). Moreover, theoretical investigations  (\cite{LHL}, \cite{RC}, \cite{SLB}, \cite{T}) suggest that conical critical points, the so called \emph{Taylor cones},  may only occur if the ratio $\frac{\beta}{\alpha}$ is sufficiently large and if the opening  angle is neither too small nor too close to $\pi/2$, i.e., it belongs to  a  certain range which is independent of the penalization factor $\gamma$.  

 In order to state our first result  we denote by $E_{\theta}$ the right circular cone with opening angle $\theta \in (0 , \pi/2)$  and vertex at the origin, i.e., 
\[
E_{\theta} = \big\{  (x', x_n) \in \R^{n-1} \times \R \, : \, x_n > \frac{1}{ \tan \theta} |x'|  \big\}.
\]
\begin{theorem} 
\label{mainthm2}
Let $n \geq 3$. There exist two positive constants $\delta_0= \delta_0(n,\frac{\beta}{\alpha})>0$ and $\lambda_0 = \lambda_0(n)>1$ such that, if $E_{\theta}$ is a right circular cone  satisfying \eqref{1st_var} then $\frac{\beta}{\alpha} \geq  \lambda_0$ and 
\[
 \delta_0 \leq \theta \leq   \frac{\pi}{2} -  \delta_0.
\]
\end{theorem}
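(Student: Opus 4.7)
The plan is to exploit the scale invariance of the cone to reduce \eqref{1st_var} on $\partial E_\theta$ to a transcendental system in $\theta$ and $\beta/\alpha$ built from an ODE on the sphere, and then to extract the asserted bounds from an asymptotic analysis of that system.

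Since $E_\theta$ is a cone and both \eqref{linear_eq} and \eqref{transition} are scale invariant, the critical potential $u$ decomposes on the sphere into harmonic modes $|x|^{s}Y_s(\omega)$. The mean curvature $H_{\partial E_\theta}=(n-2)\cot\theta/|x|$ scales as $|x|^{-1}$, while such a mode contributes to $|Du|^2$ terms of homogeneity $|x|^{2s-2}$; requiring \eqref{1st_var} to hold pointwise on $\partial E_\theta$ with $\lambda$ constant and matching powers of $|x|$ forces $s=1/2$ and $\lambda=0$, while a short cross-term argument excludes the simultaneous presence of different homogeneities. Writing $u(x)=|x|^{1/2}\phi(\vartheta)$ with $\vartheta$ the polar angle from the axis of $E_\theta$, the equation $\Delta u=0$ becomes the Gegenbauer equation
\[
\bigl(\sin^{n-2}\vartheta\,\phi'(\vartheta)\bigr)'+\tfrac{2n-3}{4}\sin^{n-2}\vartheta\,\phi(\vartheta)=0
\]
on $(0,\theta)$ and on $(\theta,\pi)$. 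Regularity at the two poles selects, up to a multiplicative constant, a solution $\psi_\beta$ on $[0,\theta]$ and a solution $\psi_\alpha$ on $[\theta,\pi]$; continuity of $\phi$ together with the transmission condition \eqref{transition} at $\vartheta=\theta$ are simultaneously satisfiable only when the compatibility relation
\[
\frac{\beta}{\alpha}=\frac{\psi_\beta(\theta)\,\psi_\alpha'(\theta)}{\psi_\beta'(\theta)\,\psi_\alpha(\theta)}
\]
holds.

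Computing $|Du|^2=|x|^{-1}\bigl(\phi(\vartheta)^2/4+\phi'(\vartheta)^2\bigr)$ at $\vartheta=\theta$ on both sides of the interface and using \eqref{transition} to eliminate $\phi_\alpha'$, the identity \eqref{1st_var} reduces to
\[
\gamma(n-2)\cot\theta=(\beta-\alpha)\Bigl[\tfrac{\beta}{\alpha}\phi_\beta'(\theta)^2-\tfrac{1}{4}\phi(\theta)^2\Bigr],
\]
and tuning the remaining scalar in $\phi$ accommodates any $\gamma>0$ precisely when the bracket is strictly positive, an intrinsic inequality equivalent to
\[
\frac{\beta}{\alpha}>q(\theta):=\frac{\psi_\beta(\theta)^2}{4\,\psi_\beta'(\theta)^2}.
\]
The bounds on $\theta$ then follow from the boundary behaviour of $\psi_\beta,\psi_\alpha$. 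Regularity of $\psi_\beta$ at $\vartheta=0$ forces $\psi_\beta'(0)=0$ while $\psi_\beta(0)\neq 0$, so $q(\theta)\to+\infty$ as $\theta\to 0^+$, giving the lower bound $\theta\geq\delta_0(n,\beta/\alpha)$. For the upper bound, the reflection $\vartheta\mapsto\pi-\vartheta$ is a symmetry of the Gegenbauer equation, so $\psi_\alpha(\pi-\cdot)$ is regular at $0$; this yields $\psi_\alpha(\pi/2)=\psi_\beta(\pi/2)$ and $\psi_\alpha'(\pi/2)=-\psi_\beta'(\pi/2)$, so the compatibility relation evaluated at $\theta=\pi/2$ reads $\beta/\alpha=-1$, and by continuity it cannot produce $\beta/\alpha>1$ for $\theta$ close to $\pi/2$, yielding $\theta\leq\pi/2-\delta_0$. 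The universal lower bound $\beta/\alpha\geq\lambda_0>1$ is obtained by showing that the right-hand side of the compatibility relation, restricted to the locus where $\beta/\alpha>q(\theta)$ holds, exceeds a dimensional constant strictly greater than $1$; I would prove this via a Wronskian identity for the Gegenbauer equation that compares both sides of the compatibility relation against $q(\theta)$.

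The principal technical obstacle is this last step: extracting a uniform lower bound $\beta/\alpha\geq\lambda_0>1$ depending only on $n$ requires quantitative control on the Legendre-type functions $\psi_\beta,\psi_\alpha$ with non-integer eigenvalue $(2n-3)/4$ across the whole admissible interval, rather than the purely qualitative endpoint behaviour used for the $\theta$-bounds.
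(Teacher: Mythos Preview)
Your approach is genuinely different from the paper's, and it has two real gaps.

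\textbf{The homogeneity reduction is not justified.} The theorem asserts that \emph{no} pair $(E_\theta,u)$ with $u$ solving \eqref{linear_eq} can satisfy \eqref{1st_var} unless $\theta$ and $\beta/\alpha$ lie in the stated ranges. You immediately restrict to $u=|x|^{1/2}\phi(\vartheta)$, claiming that ``matching powers of $|x|$'' together with ``a short cross-term argument'' forces this. But a solution of \eqref{linear_eq} on a cone need not be a finite sum of homogeneous spherical modes, and even when it is, the nonlinear pointwise constraint \eqref{1st_var} does not obviously kill the cross terms: one would need the various angular factors to vanish \emph{identically on the single latitude} $\vartheta=\theta$, which is one scalar condition per pair of modes, not a contradiction. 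Without this reduction your ODE analysis never gets started. (Note also that your spherical expansion tacitly assumes growth/regularity conditions on $u$ at $0$ and at infinity that are nowhere imposed.)

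\textbf{The bound $\beta/\alpha\ge\lambda_0(n)$ is left as a plan.} You correctly identify this as the hard step and propose a Wronskian argument, but nothing concrete is carried out; in particular there is no mechanism that produces a dimensional constant strictly larger than $1$ uniformly over the admissible $\theta$-range.

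The paper avoids both difficulties by an energy argument that never classifies $u$. From \eqref{1st_var}, the transmission condition, and $H_{\partial E_\theta}(x)=(n-1)\cos\theta\,|x|^{-1}$, one gets the pointwise lower bound $|\partial_\nu u_\beta(x)|\ge c\,|x|^{-1/2}$ on $\partial E_\theta$ near the vertex. Integrating by parts against a cut-off then yields $\int_{B_\rho}|Du|^2\,dx\ge c_0\,\rho^{\,n-1}$. This contradicts the decay $\int_{B_\rho}|Du|^2\le C\rho^{\,n-1+\sigma}$ provided by Proposition~\ref{decay_est_2} (cases (i) and (iii)) when $\theta$ is near $0$ or near $\pi/2$, and by Proposition~\ref{espfus} when $\beta/\alpha$ is below the explicit threshold $\lambda_0(n)$ in \eqref{estimating_gamma}. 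Thus all three conclusions fall out of one energy inequality plus linear elliptic regularity, with no special-function analysis and no homogeneity hypothesis on $u$. Your endpoint arguments for $\theta\to0$ and $\theta\to\pi/2$ are morally consistent with this picture (small or nearly-flat cones are ``too regular'' to support a critical $u$), but the paper's route makes this quantitative and uniform in $u$.
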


As far as we know, this result is the first rigorous proof of the fact that Taylor cones may  occur only for certain angles, and  provided that the ratio $\frac{\beta}{\alpha}$ is sufficiently large.  We remark  that we are able to give explicit estimates of the constants $\delta_0$ and $ \lambda_0$. In particular,  $\delta_0$ and $ \lambda_0$  are independent of  the penalization factor $\gamma$, which  is in accordance with the observations and theoretical results reported in the physical literature. 

The starting point in the proof of Theorem \ref{mainthm2}  is  a rather simple decay estimate for the gradient of a minimizer of the Dirichlet energy  (see Proposition~\ref{decay_est}).  Roughly speaking, we prove that if $u_E$ is a solution of \eqref{linear_eq} and $x_0$ is a point in $\Omega$, where either the density of $E$ is close to $0$  or $1$, or the set $E$ is asymptotically close to a hyperplane, then for sufficiently small $\rho$ we have 
\begin{equation} 
\label{decay_result}
\int_{B_{\rho}(x_0)}|D u_E|^2 \, dx \leq C \rho^{n- \delta}
\end{equation} 
for any $\delta >0$. As a consequence of this estimate one has that if the opening angle of the Taylor cone is  not in the above range then the Dirichlet energy around the vertex decays faster than the perimeter thus leading to a contradiction to the criticality condition  \eqref{1st_var}.

In the mathematical literature people have also considered both the problem $(P)$ of minimizing \eqref{energy} under the boundary condition $u = u_0$ on  $\partial \Omega$ and  the constrained problem 
\[
\min \{\F(E, v)\, : \, v = u_0 \,\text{ on }\,\partial \Omega, \,|E|=d  \} \leqno (P_c)
\]
for some given $d < |\Omega|$. The partial regularity of minimizers of the unconstrained problem $(P)$ was proved by Fan-Hua Lin in \cite{Lin} (see also \cite{AB}, \cite{LK}). In the special case $n=2$ the result of Lin has been improved by Larsen in \cite{L1}, \cite{L2}. However, the full regularity of the free interface $\partial E$ in two dimensions still remains open. 

In the second part of the paper we revisit the proof of the partial  regularity of minimizers.
 \begin{theorem}
\label{mainthm1}
If $(E, u)$ is a minimizer of either problem $(P)$ or problem $(P_c)$, then 
\begin{enumerate}
\item[(a)] there exists a relatively open set $\Gamma \subset \partial E$ such that $\Gamma$ is a $C^{1, \sigma}$ hypersurface for all $0 < \sigma < 1/2$,
\item[(b)]  there exists $\eps>0$, depending only on $\frac{\beta}{\alpha}$ and $n$, such that $\Ha^{n-1- \eps}\left((\partial  E  \setminus \Gamma ) \cap \Omega\right)= 0$.
\end{enumerate}
\end{theorem}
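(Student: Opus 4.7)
The plan is to prove Theorem~\ref{mainthm1} by showing that a minimizer $(E,u)$ is a local $\omega$-minimizer of the perimeter with a power-type error, invoking Tamanini's regularity theorem for quasi-minimizers, and then upgrading the Hölder exponent afterwards via the Campanato characterization and the decay estimate provided by Proposition~\ref{decay_est}.

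First I would establish the uniform density estimates $c r^{n-1} \leq P(E, B_r(x_0)) \leq C r^{n-1}$ for $x_0 \in \partial E \cap \Omega$ and $r \leq r_0$. These follow from comparing $(E,u)$ with the natural competitors $(E \cup B_r(x_0), u_{E\cup B_r(x_0)})$ and $(E \setminus B_r(x_0), u_{E \setminus B_r(x_0)})$: the change in Dirichlet energy is of order $r^{n-1}$ by a Caccioppoli-type inequality for the transmission problem, while the perimeter change has the same order. In the constrained case $(P_c)$, a volume-fixing perturbation supported in a fixed ball away from $x_0$ provides the necessary adjustment.

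Next, at any $x_0 \in \partial E \cap \Omega$ where the cylindrical excess of $\partial E$ at some scale $r$ is sufficiently small, $E$ is close to a half-space and Proposition~\ref{decay_est} yields
\[
\int_{B_\rho(x_0)} |Du|^2\, dx \leq C\rho^{n-\delta}
\]
for all $\rho \leq r$ and any preassigned $\delta>0$. Comparing $(E,u_E)$ with competitors $(F,u_F)$, $F \triangle E \Subset B_\rho(x_0)$, then shows $\gamma P(E,B_\rho(x_0)) \leq \gamma P(F,B_\rho(x_0)) + C\rho^{n-\delta}$, so $E$ is an $\omega$-minimizer of perimeter in $B_r(x_0)$ with $\omega(\rho) = C\rho^{1-\delta}$. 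Tamanini's regularity theorem then gives that $\partial E$ is a $C^{1,\alpha}$ hypersurface near $x_0$; declaring $\Gamma$ to be the set of such flat points makes it relatively open. To push the Hölder exponent up to any $\sigma<1/2$, I would iterate the decay estimate on the now-regular interface: once $\partial E$ is $C^{1,\alpha}$, the transmission condition \eqref{transition} and elliptic regularity produce $Du \in C^{0,\sigma}$ on each side for every $\sigma<1/2$ (the exponent $1/2$ being sharp for the harmonic transmission problem), whence Campanato's characterization upgrades the excess decay to $\rho^{n-1+2\sigma}$ and yields $\partial E \in C^{1,\sigma}$ on $\Gamma$.

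For the dimension bound on $\Sigma := (\partial E \setminus \Gamma) \cap \Omega$, the key is the contrapositive of Proposition~\ref{decay_est}: at a singular point the flatness criterion (and the density-dichotomy criterion) must fail at every scale, forcing a quantitative lower bound
\[
\int_{B_r(x_0)} |Du|^2\, dx \geq c\, r^{n-1+\eps}
\]
with $\eps = \eps(n,\beta/\alpha)>0$. Summing this over a Vitali covering of $\Sigma$ and comparing with the finite global Dirichlet energy gives $\Ha^{n-1-\eps}(\Sigma)=0$. The main obstacle, and indeed the heart of the argument, is producing the quantitative dichotomy --- \emph{either flatness at some scale with controlled constants, or a power-type lower bound on the Dirichlet energy with explicit exponent depending only on $n$ and $\beta/\alpha$} --- which requires a delicate compactness and blow-up analysis using the decay estimate \eqref{decay_result} together with the almost-monotonicity of the rescaled Dirichlet energy; the transmission condition \eqref{transition} prevents direct application of standard monotonicity formulas, so this step must be carried out with some care.
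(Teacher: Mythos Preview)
Your outline for part~(a) via Tamanini's quasi-minimizer theory is a reasonable alternative to the paper's direct excess-improvement argument, but you have glossed over a real circularity. You assert that once the excess is small at scale $r$, Proposition~\ref{decay_est} gives $\int_{B_\rho(x_0)}|Du|^2\le C\rho^{n-\delta}$ for \emph{all} $\rho\le r$. It does not: Proposition~\ref{decay_est} only gives one step of decay, from $B_r$ to $B_{\tau r}$, under the hypothesis that $E$ is close to a half-space \emph{in $B_r$}. To iterate you must know that $E$ remains close to a half-space in $B_{\tau r}$, i.e.\ that the excess stays small at the smaller scale. This requires an excess-improvement step coupled to the Dirichlet decay, which is precisely what the paper establishes in Proposition~\ref{flatness_impro} and then iterates in Steps~1--3 of the proof. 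Once you supply that coupled iteration you have essentially reproduced the paper's argument, and invoking Tamanini afterwards is redundant; conversely, without it the $\omega$-minimality with $\omega(\rho)=C\rho^{1-\delta}$ is not justified.

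Your approach to part~(b) has a genuine gap. The dichotomy you propose---\emph{either flatness at some scale, or a power lower bound $\int_{B_r(x_0)}|Du|^2\ge c\,r^{n-1+\eps}$}---is false in dimensions $n\ge 8$. A singular point $x_0$ may very well satisfy $\D(x_0,r)\to 0$: blow-ups of $(E,u)$ at such a point converge (by the compactness Lemma~\ref{compactness} with $Du_h\rightharpoonup 0$) to a \emph{perimeter minimizer} with a singular point at the origin, e.g.\ a Simons-type cone. At these points your Vitali-covering argument based on a Dirichlet-energy lower bound simply does not apply. The paper's proof (Step~4) instead splits the singular set into two pieces: the set where $\limsup_{r\to 0}\D(x,r)>0$, whose Hausdorff dimension is at most $n-p$ by the higher integrability $Du\in L^{2p}_{loc}$ from Lemma~\ref{gehring} (this is where the $\eps=p-1$ depending only on $n$ and $\beta/\alpha$ comes from); and the set $\Sigma$ where $\D\to 0$, which by blow-up consists of singular points of a limiting perimeter minimizer and hence, by Federer's classical dimension reduction, has dimension at most $n-8$ (and is empty for $n\le 7$). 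Your proposed ``contrapositive of Proposition~\ref{decay_est}'' does not yield the lower bound you claim, because failure of the flatness hypothesis at every scale is a statement about the \emph{excess}, not about the Dirichlet integral.
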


The above statement slightly  generalizes the regularity result proved in \cite{Lin}, where only the  unconstrained problem $(P)$ was considered. The value of $\eps$ is greater than or equal to   $p-1$ where $2p$ is  the higher integrability exponent of $Du_E$ which is obtained  by a standard application of Gehring's lemma (see Lemma \ref{gehring}). In particular, it is independent of the penalization factor $\gamma$.  We note also that, once the $C^{1,\sigma}$ regularity of $\partial^*E\cap\Omega$ is obtained, then using \cite[Lemma 5.3]{Lin} and a standard bootstrap argument one obtains that $\partial^*E\cap\Omega$ is in fact $C^\infty$.

As in \cite{Lin} and in the proof of the regularity of minimizers of the Mumford-Shah functional \cite{AFP} our proof of Theorem \ref{mainthm1} is based on a interplay between the perimeter and the Dirichlet integral.  Differently from  \cite{Lin}  we do not use the heavy machinery of currents and do not derive the  monotonicity formula. Instead, our starting point is the same decay estimate \eqref{decay_result} for the Dirichlet energy used for the study of Taylor cones. This estimate implies that if in a ball $B_r(x_0)$ the perimeter of $E$ is sufficiently small then the total energy  in a smaller ball $B_{\tau r}(x_0)$ 
\[
P(E,B_{\tau r}(x_0)) + \int_{B_{\tau r}(x_0)} |Du_E|^2
\]
is much smaller than the total energy in $B_{r}(x_0)$ (Lemma \ref{decay_energy}). In turn this fact leads to a density lower bound for the perimeter.  

Another  consequence of the estimate  \eqref{decay_result} is that whenever the excess
\[
\E(x_0,r)= \inf_{\nu \in S^{n-1}} \frac{1}{r^{n-1}} \int_{\partial E \cap B_r(x_0)} |\nu_E(x) - \nu|^2\, d \Ha^{n-1} \to 0 \qquad \text{as} \, r \to 0,
 \] 
the Dirichlet integral in $B_r(x_0)$ decays as in \eqref{decay_result}. As in the  Mumford-Shah case this is  one of the two key estimates needed for the regularity proof (see Step 1 of the proof of the Theorem~\ref{mainthm1} at the end of Section 3). Finally the excess decay is proven with a more or less standard argument similar to the one used for the $\Lambda$-minimizers of the perimeter.

\section{Regularity of  elastic minima}

In this section we study the regularity of the elastic minimum associated to a set $E$, i.e., solution of \eqref{linear_eq}.   In the main result of the section, Proposition \ref{decay_est}, we  prove that, if the density of $E$ is close to $0$  or $1$ or the set $E$ is asymptotically close to a hyperplane,  then the elastic energy $\int_{B_{\rho}(x_0)} |Du|^{2}\, dx $ decays faster that  $\rho^{n-1}$. We prove  Proposition \ref{decay_est} with a direct argument and therefore we are  able to  provide explicit bounds for the relevant constants. 

We begin by deriving the Caccioppoli's inequality for solutions of \eqref{linear_eq}. Even though the argument is standard, we  give the proof in order to keep track of the constants. We denote the cube, centred at $x_0$ and with side lenght $2r$, by $Q_r(x_0)$. In the case  $x_0= 0$ we simply write $Q_r$. We  recall the Sobolev-Poincar\'e inequality, i.e., for every function $u \in W^{1,p}(Q_r)$, $1 \leq p < n$, it holds 
\begin{equation} \label{poincare}
||u-u_r||_{L^{p^*}(Q_r)} \leq c(n,p) ||Du||_{L^p(Q_r)}
\end{equation}
where $u_r = \medintdue_{Q_r} u \, dx$ and $p^* = \frac{pn}{n-p}$.

\begin{lemma}
\label{caccioppoli}
Let $u \in W^{1,2}(\Omega)$ be a solution of \eqref{linear_eq}. Then for every cube $Q_{2r}(x_0) \subset \subset \Omega$ it holds
\[
\medint_{Q_r(x_0)} |Du|^2 \, dx \leq  C \, \left(\medint_{Q_{2r}(x_0)} |Du|^{2m} \, dx \right)^{\frac{1}{m}},
\]
where $m = \frac{n}{n+2}$, $C = C_{S,n}^2 \, 2^{n+8}  \frac{\beta}{\alpha}$, and $C_{S,n}$ is the constant in the Sobolev-Poincar\'e inequality \eqref{poincare} with  $p = \frac{2n}{n+2}$.
\end{lemma}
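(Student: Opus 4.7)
The proof will follow the classical Caccioppoli route, the only twist being that the coefficient $\sigma_E$ is piecewise constant with ratio $\beta/\alpha$, so the ellipticity constants must be tracked explicitly.

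First I would fix a cutoff $\eta \in C_c^\infty(Q_{2r}(x_0))$ with $\eta \equiv 1$ on $Q_r(x_0)$, $0 \leq \eta \leq 1$, and $|D\eta| \leq 2/r$, and test \eqref{linear_eq} with $\varphi = \eta^2 (u - c)$ where $c := u_{2r} = \medintdue_{Q_{2r}(x_0)} u \,dx$. Expanding $D\varphi = \eta^2 Du + 2\eta (u-c) D\eta$ gives
\[
\int_{\Omega} \sigma_E \eta^2 |Du|^2\,dx = -2\int_{\Omega} \sigma_E (u-c)\eta\, \langle D\eta, Du\rangle\,dx,
\]
and Young's inequality applied with parameter $1/2$ absorbs the gradient term back on the left, yielding
\[
\int \sigma_E \eta^2 |Du|^2\,dx \leq 4 \int \sigma_E (u-c)^2 |D\eta|^2 \,dx.
\]
Using $\alpha \leq \sigma_E \leq \beta$ and $|D\eta|^2 \leq 4/r^2$ this becomes
\[
\int_{Q_r(x_0)} |Du|^2\,dx \leq \frac{16\beta}{\alpha\, r^2} \int_{Q_{2r}(x_0)} |u-u_{2r}|^2\,dx.
\]

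Next I would invoke Sobolev-Poincar\'e \eqref{poincare} with the exponent $p = \frac{2n}{n+2}$, for which $p^\ast = 2$ so the inequality is dimensionally consistent and scale invariant; this gives
\[
\int_{Q_{2r}(x_0)} |u - u_{2r}|^2\,dx \leq C_{S,n}^2 \left(\int_{Q_{2r}(x_0)} |Du|^p\,dx\right)^{2/p}.
\]
Combining the two displays produces the desired absolute (non-averaged) inequality; the only remaining task is bookkeeping to convert it into the stated form with averages on both sides.

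For that final step, I note that $2m = p$ and $1/m = 2/p = (n+2)/n$, so
\[
\left(\int_{Q_{2r}(x_0)} |Du|^p\,dx\right)^{2/p} = |Q_{2r}(x_0)|^{(n+2)/n}\left(\medint_{Q_{2r}(x_0)} |Du|^{2m}\,dx\right)^{1/m} = (4r)^{n+2}\left(\medint_{Q_{2r}(x_0)} |Du|^{2m}\,dx\right)^{1/m}.
\]
Dividing the previous estimate by $|Q_r(x_0)| = (2r)^n$ and computing
\[
\frac{16 \cdot (4r)^{n+2}}{r^2 \,(2r)^n} = 16 \cdot \frac{4^{n+2}}{2^n} = 2^{n+8}
\]
gives the advertised constant $C = 2^{n+8}\, C_{S,n}^2\, \beta/\alpha$.

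There is no genuine obstacle here: the transmission condition \eqref{transition} never enters directly because the weak formulation \eqref{linear_eq} already encodes it, and the argument only relies on two-sided bounds for $\sigma_E$. The only care needed is (i) using the Sobolev-Poincar\'e exponent $p^\ast = 2$ so that the right-hand side naturally produces the $L^{2m}$ norm with the correct scaling, and (ii) keeping the constants explicit in each application of Young's inequality and in the passage from integrals to averages, which is the only place where the factor $2^{n+8}$ is built up.
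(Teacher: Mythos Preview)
Your proof is correct and follows essentially the same approach as the paper: the same cutoff test function $\eta^2(u-u_{2r})$, the same application of Young's inequality and the ellipticity bounds to reach $\int_{Q_r}|Du|^2\le \frac{16\beta}{\alpha r^2}\int_{Q_{2r}}|u-u_{2r}|^2$, and the same Sobolev--Poincar\'e step. Your additional bookkeeping converting integrals to averages and deriving $2^{n+8}$ explicitly is more detailed than what the paper writes but matches it exactly.
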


\begin{proof}
Without loss of generality we may assume that $x_0 = 0$. Let  $\zeta \in C_0^{\infty}(Q_{2r})$ be a cut-off function  such that $\zeta \equiv 1$ in $Q_r$ and $|D\zeta| \leq \frac{2}{r}$. We choose a test function $\varphi = (u - u_{2r}) \zeta^2$ in \eqref{linear_eq}, where $u_{2r} =  \medintdue_{Q_{2r}} u \, dx$   and apply Young's inequality to obtain
\[
\int_{Q_{r}} |Du|^2 \, dx  \leq \int_{Q_{2r}} |Du|^2\zeta^2 \, dx \leq  \frac{4 \beta}{\alpha}\int_{Q_{2r}} |u - u_{2r}|^2 | D\zeta|^2 \, dx  \leq \frac{16 \beta}{r^2 \alpha}\int_{Q_{2r}} |u - u_{2r}|^2 \, dx.
\]
We use the Sobolev-Poincar\'e inequality \eqref{poincare} to deduce
\[
\int_{Q_{2r}} |u - u_{2r}|^2 \, dx \leq C_{S,n}^2 \left(\int_{Q_{2r}} |Du|^{2m} \, dx \right)^{\frac{1}{m}}.
\]
The result then follows from the two  inequalities  above.
\end{proof}

We apply Gehring's Lemma to obtain higher integrability for the gradient of $u$. 
\begin{lemma}
\label{gehring}
Let $u \in W^{1,2}(\Omega)$ solve \eqref{linear_eq}. There exists $p >1$ such that for any ball $B_{2r}(x_0) \subset \! \subset \Omega$ it holds
\[
 \medint_{B_r(x_0)}|Du|^{2p} \, dx\leq C \left( \medint_{B_{2r}(x_0)}|Du|^2 \, dx \right)^p.
\]
The constants can be estimated explicitly as
\[
p = \frac{2C_1 - m}{2C_1 -1} \quad \text{for }\,  C_1 = C_{S,n}^2 \, 2^{10}\cdot  80^n  \frac{\beta}{\alpha}  \quad \text{and }\, C =    2^{2n+1} 5^{np} n^{np/2} \omega_n^{p-1},
\]
where $\omega_n$ is the volume of the unit ball and $m=\frac{n}{n+2}$.
\end{lemma}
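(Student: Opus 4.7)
The proof is an application of Gehring's higher integrability lemma to the reverse Hölder inequality supplied by Lemma~\ref{caccioppoli}, with the constants tracked explicitly.

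First, I would reinterpret Lemma~\ref{caccioppoli} as a reverse Hölder inequality. Setting $g := |Du|^{2m}$ with $m = n/(n+2) \in (0,1)$, it reads
\[
\left( \medint_{Q_r(x_0)} g^{1/m}\, dx \right)^{m} \leq C_0 \medint_{Q_{2r}(x_0)} g\, dx,
\]
with $C_0 = C_{S,n}^2\, 2^{n+8}\,\beta/\alpha$. This is precisely the hypothesis of Gehring's lemma at integrability exponent $q = 1/m > 1$. I would then invoke the quantitative Calderón–Zygmund stopping-time proof of Gehring's lemma on cubes; this yields some $s > q$ with $g \in L^s_{\mathrm{loc}}$ together with a reverse Hölder inequality at the higher exponent, and writing $s = p/m$ one obtains
\[
\medint_{Q_r(x_0)} |Du|^{2p}\, dx \leq C' \left( \medint_{Q_{2r}(x_0)} |Du|^2\, dx \right)^{p}
\]
on every cube $Q_{2r}(x_0) \subset\subset \Omega$. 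The explicit value $p = (2C_1 - m)/(2C_1 - 1)$ is the largest admissible improvement exponent in the stopping-time argument, with $C_1$ collecting the reverse Hölder constant $C_0$ together with the geometric doubling factor of order $40^n$ produced by the iterated Vitali-type selection of level-set sub-cubes.

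Second, I would pass from cubes to balls. Using the inclusion $B_r(x_0) \subset Q_r(x_0)$ on the left-hand side and a finite covering of $B_r(x_0)$ by slightly smaller cubes whose concentric doubles remain inside $B_{2r}(x_0)$ (equivalently, the inclusion $Q_r \subset B_{r\sqrt{n}}$ combined with rescaling) together with the volume ratio $|Q_r|/|B_r| = 2^n/\omega_n$, the cube estimate transforms into the displayed ball estimate with the stated constant $C = 2^{2n+1}\, 5^{np}\, n^{np/2}\, \omega_n^{p-1}$: the factor $5^{np}$ encodes the safety margin in the covering, $n^{np/2}$ arises from $Q_r \subset B_{r\sqrt{n}}$ raised to the $p$-th power, and $\omega_n^{p-1}$ comes from averaging normalization.

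The only real difficulty is the bookkeeping of constants. Conceptually the lemma is Caccioppoli plus a black-box Gehring, but since the later decay estimate \eqref{decay_result} and Theorem~\ref{mainthm2} depend on constants being independent of the penalization factor $\gamma$, one must propagate the explicit dependence on $n$ and $\beta/\alpha$ through every step rather than appealing to Gehring's lemma qualitatively.
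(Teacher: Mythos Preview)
Your proposal is correct and follows essentially the same route as the paper: Caccioppoli's inequality gives the reverse H\"older hypothesis, Gehring's lemma via a Calder\'on--Zygmund stopping-time argument (the paper follows Giusti's version with the weight $d(x)^n|Du|^2$ and the dyadic annular decomposition of the unit cube) produces the higher integrability on cubes with exactly the exponent $p=(2C_1-m)/(2C_1-1)$, and a covering of $B_{1/2}$ by cubes of side $1/h$ with $4\sqrt{n}<h\leq 5\sqrt{n}$ converts the cube estimate into the ball estimate with the stated constant. Your heuristic attributions of the factors $5^{np}$, $n^{np/2}$, and $\omega_n^{p-1}$ match the paper's computation; the only point you leave implicit is the localization device $F(x)=d(x)^n|Du(x)|^2$, which is what allows the level-set estimate to hold globally on the fixed cube and contributes the additional $4^n$ turning your ``$40^n$'' into the paper's $80^n$.
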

The above result is well known but it is usually stated without estimates of the constants. In the Appendix  we  will go through  the proof of Lemma \ref{gehring}  from \cite{Giusti} and  evaluate  every  constant explicitly.

In the next lemma we prove a monotonicity formula for the elastic minimum  in the case when $E$ is a half-space.
\begin{lemma}
\label{monotonicity}
Let $E = \{ x \in \R^n \mid \langle x- \bar{x}, e \rangle < 0  \} \cap \Omega$ for some unit vector $e$ and a point $\bar{x}$, and suppose  $u$ is a solution of  \eqref{linear_eq}. Let $ x_0 \in \partial E \cap \Omega$ and $r>0$ be such that $B_r(x_0)  \subset \Omega$. Then 
\[
\rho \mapsto \medint_{B_{\rho}(x_0)}\sigma_E(x) |Du|^2 \, dx 
\]
is increasing in  $(0,r)$.
\end{lemma}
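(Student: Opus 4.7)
The plan is to reduce the monotonicity to a single favourable test function computation. After translating and rotating I may take $x_0=0$, $\partial E=\{x_n=0\}$ and $E=\{x_n<0\}$. Set $w:=\sigma_E|Du|^2$, $F(\rho):=\int_{B_\rho}w\,dx$ and $\bar F(\rho):=\medint_{B_\rho}w\,dx=F(\rho)/(\omega_n\rho^n)$. A direct computation gives
\[
\omega_n\rho^{n+1}\bar F'(\rho)=\rho\int_{\partial B_\rho}w\,d\mathcal{H}^{n-1}-n\int_{B_\rho}w\,dx,
\]
so the lemma is equivalent to the sphere-over-ball sub-mean inequality $\rho\int_{\partial B_\rho}w\geq n\int_{B_\rho}w$. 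I would produce it by testing the distributional Laplacian of $w$ against the radial cut-off $\varphi(x):=\rho^2-|x|^2$.

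Applying Green's second identity on $B_\rho\cap E$ and on $B_\rho\cap(\Omega\setminus E)$ separately, using that $u_\alpha,u_\beta$ are classically harmonic in their open half-balls (so that $\Delta w=2\sigma_E|D^2u|^2$ there) and that the outward normals along $\partial E\cap B_\rho$ are $+e_n$ and $-e_n$ respectively, and then summing the two contributions yields
\[
\int_{B_\rho}w\,\Delta\varphi\,dx=2\int_{B_\rho}\sigma_E|D^2u|^2\,\varphi\,dx-2\rho\int_{\partial B_\rho}w\,d\mathcal{H}^{n-1}+I_1+I_2,
\]
where the interface contributions are
\[
I_1:=\int_{\partial E\cap B_\rho}\partial_n\varphi\,(\beta|Du_\beta|^2-\alpha|Du_\alpha|^2)\,d\mathcal{H}^{n-1},
\]
\[
I_2:=\int_{\partial E\cap B_\rho}\varphi\,(\alpha\,\partial_n|Du_\alpha|^2-\beta\,\partial_n|Du_\beta|^2)\,d\mathcal{H}^{n-1}.
\]

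The main point, and the step I expect to be the only delicate one, is that both $I_1$ and $I_2$ vanish. The vanishing of $I_1$ is automatic because $\varphi$ is radial and centered on $\partial E$: $\partial_n\varphi(x)=-2x_n\equiv 0$ on $\partial E$. The vanishing of the density of $I_2$ is the substantive calculation: differentiating the continuity $u_\alpha=u_\beta$ and the flux condition $\alpha\partial_n u_\alpha=\beta\partial_n u_\beta$ tangentially along $\partial E$ produces $\partial_j u_\alpha=\partial_j u_\beta$, $\partial_{jj}u_\alpha=\partial_{jj}u_\beta$ and $\alpha\partial_{jn}u_\alpha=\beta\partial_{jn}u_\beta$ on $\partial E$ for $j<n$; substituting these together with $\partial_{nn}u=-\sum_{j<n}\partial_{jj}u$ (harmonicity) into the expansion
\[
\sigma\,\partial_n|Du|^2=2\sum_{j<n}(\partial_j u)(\sigma\partial_{jn}u)+2(\sigma\partial_n u)(\partial_{nn}u),
\]
used with $\sigma=\alpha$ on the $\alpha$-side and $\sigma=\beta$ on the $\beta$-side, yields $\alpha\partial_n|Du_\alpha|^2=\beta\partial_n|Du_\beta|^2$ on $\partial E$, hence $I_2=0$.

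Since $\Delta\varphi\equiv -2n$ on $B_\rho$, the left-hand side of the Green identity equals $-2nF(\rho)$, so combining it with $I_1=I_2=0$ produces
\[
2\rho\int_{\partial B_\rho}w\,d\mathcal{H}^{n-1}-2nF(\rho)=2\int_{B_\rho}\sigma_E|D^2u|^2(\rho^2-|x|^2)\,dx\geq 0,
\]
which is exactly the inequality needed to conclude $\bar F'(\rho)\geq 0$. All three defining conditions of the transmission problem (continuity of $u$, the flux condition $\alpha\partial_n u_\alpha=\beta\partial_n u_\beta$, and harmonicity in each half-space) are genuinely used in the calculation yielding $I_2=0$; everything else is careful bookkeeping of signs of outward normals in Green's identity.
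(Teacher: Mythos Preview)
Your proof is correct and rests on exactly the same two ingredients as the paper's: subharmonicity of $|Du|^2$ in each half-ball (via Bochner's formula for harmonic functions) and the pointwise interface identity $\alpha\,\partial_n|Du_\alpha|^2=\beta\,\partial_n|Du_\beta|^2$ on $\partial E$, which you derive exactly as the paper does from the transmission condition, tangential differentiation, and harmonicity. The only cosmetic difference is that the paper differentiates the \emph{spherical} average and applies the divergence theorem once, whereas you test against $\rho^2-|x|^2$ via Green's second identity to get the ball-average monotonicity directly; these are two standard, equivalent packagings of the same sub-mean-value argument.
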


\begin{proof}
Without loss of generality we may assume that $E = \{ x \in \R^n \mid x_n < 0  \} \cap \Omega$ and $x_0= 0$.  Let us fix a radius $r$ such that $B_r \subset \Omega$. From standard elliptic regularity theory we know that $u$ is smooth in the upper and in the  lower part of the ball $B_r$ with respect to the hyperplane $\partial E = \{ x_n= 0\}$. To be more precise, let us denote $\bar{B}_r^+ = \overline{B_r \setminus E}$ and $\bar{B}_r^- = \overline{B_r \cap E}$. Then $u_{\alpha} \in C^{\infty}(\bar{B}_r^+)$ and $u_{\beta} \in C^{\infty}(\bar{B}_r^-)$ and they are harmonic in the interior of $\bar{B}_r^+$ and $\bar{B}_r^-$, where $u_{\alpha}$ and $u_{\beta}$ are the restrictions of $u$ on $\Omega \setminus E$ and $E$.

The goal is to show that the function $\vphi : (0,r) \to \R$
\[
\vphi(\rho) :=  \medint_{\partial B_{\rho}}\sigma_E(x) |Du(x)|^2 \, d \Ha^{n-1}(x) = \medint_{\partial B_1}\sigma_E(\rho y) |Du(\rho y)|^2 \, d \Ha^{n-1}(y) 
\]
is increasing. Notice that $\sigma_E(\rho y) = \sigma_E( y)$ since $E$ is a half-space. Denote $v = |Du|^2$, $v_{\alpha}= |Du_{\alpha}|^2$ and $v_{\beta}= |Du_{\beta}|^2$. Since $v_{\alpha}$ and $v_{\beta}$  are   subharmonic in the interior of  $B_{\rho}^+$ and $B_{\rho}^-$ we deduce by the divergence theorem that 
\[
\begin{split}
\vphi'(\rho) &= \medint_{\partial B_{\rho}}\sigma_{E}(x) \langle Dv(x), \frac{x}{\rho} \rangle \, d \Ha^{n-1}(x) \\
&= \frac{1}{\Ha^{n-1}(\partial B_{\rho})} \left(\alpha  \int_{\partial B_{\rho}^+} \partial_{\nu} v_{\alpha} \, d \Ha^{n-1} + \beta \int_{\partial B_{\rho}^-} \partial_{\nu} v_{\beta} \, d \Ha^{n-1}  +\int_{\partial E  \cap B_{\rho}} \alpha\,  \partial_{x_n} v_{\alpha} - \beta \, \partial_{x_n}v_{\beta}   \, d \Ha^{n-1}  \right)\\
&=  \frac{1}{\Ha^{n-1}(\partial B_{\rho})} \left(\alpha  \int_{ B_{\rho}^+} \Delta v_{\alpha}\, d \Ha^{n-1} + \beta \int_{ B_{\rho}^-} \Delta v_{\beta}\, d \Ha^{n-1}+\int_{\partial E  \cap  B_{\rho}} \alpha\,  \partial_{x_n} v_{\alpha} - \beta \, \partial_{x_n}v_{\beta}   \, d \Ha^{n-1}  \right)\\
&\geq \frac{1}{\Ha^{n-1}(\partial B_{\rho})}\int_{\partial E  \cap B_{\rho}} \alpha\,  \partial_{x_n} v_{\alpha} - \beta \, \partial_{x_n}v_{\beta}   \, d \Ha^{n-1}.
\end{split}
\]
We will show that $\beta \partial_{x_n}v_{\beta} = \alpha\,  \partial_{x_n} v_{\alpha}$ on $\partial E$ from which the claim follows.

Since  $u_{\alpha}= u_{\beta}$ on $\partial E $ we have 
\begin{equation}
\label{the_same}
\partial_{x_i} u_{\alpha} = \partial_{x_i} u_{\beta} \quad \text{and} \quad  \partial_{x_ix_i} u_{\alpha} = \partial_{x_ix_i} u_{\beta} \quad  \text{for }\, i =1, 2, \dots, n-1. 
\end{equation}
The transmission condition \eqref{transition} reads as 
\begin{equation}
\label{trans_plane}
\alpha \, \partial_{x_n} u_{\alpha} = \beta \, \partial_{x_n} u_{\beta}  \quad \text{on }\, \partial E. 
\end{equation}
Differentiating \eqref{trans_plane} with respect to $x_i$, for $ i =1, 2, \dots, n-1$, yields
\[
\alpha \, \partial_{x_i x_n} u_{\alpha} = \beta \, \partial_{x_i x_n} u_{\beta} \quad \text{on }\, \partial E. 
\]
On the other hand, since $u_{\alpha}$ and $u_{\beta}$ are harmonic, we have by \eqref{the_same} that
\[
\partial_{x_n x_n} u_{\alpha} = - \sum_{i=1}^{n-1} \partial_{x_i x_i} u_{\alpha} =   - \sum_{i=1}^{n-1} \partial_{x_i x_i} u_{\beta} = \partial_{x_n x_n} u_{\beta} \quad \text{on }\, \partial E. 
\]
Therefore on $\partial E$ it holds 
\[
 \alpha\,  \partial_{x_n} v_{\alpha} = 2 \sum_{i=1}^{n} \alpha \, \partial_{x_i} u_{\alpha}  \, \partial_{x_i x_n} u_{\alpha} = 2 \sum_{i=1}^{n} \beta \, \partial_{x_i} u_{\beta}  \, \partial_{x_i x_n} u_{\beta}  =  \beta\,  \partial_{x_n} v_{\beta}
\]
which implies $\vphi'(\rho) \geq 0$.

 
\end{proof}

The main result of this section is the following decay estimate for elastic minimum. 

\begin{proposition} 
\label{decay_est}
Let $u \in W^{1,2}(\Omega)$ be a solution of \eqref{linear_eq}. For all $\tau \in (0,1)$ there exists $\eps_0 = \eps_0(\tau)>0$ such that if $B_r(x_0) \subset \! \subset \Omega$ and one of the following conditions hold
\begin{enumerate}
\item[(i)] $\frac{|E \cap B_{r}(x_0)|}{| B_{r}|} <  \eps_0$,  
\item[(ii)] $\frac{|B_{r}(x_0) \setminus E|}{| B_{r}|} <   \eps_0$,  
 \item[(iii)] there exists a half-space $H$ such that   $ \frac{|(E \Delta H ) \cap B_{r}(x_0)|}{| B_{r}|} <  \eps_0$,
\end{enumerate}
then
\[
\int_{B_{\tau r}(x_0)} |Du|^{2}\, dx \leq C_0 \tau^n \int_{B_{r}(x_0)}  |Du|^2 \, dx
\]
for some constant $C_0$ depending only on $\frac{\beta}{\alpha}$ and $n$.
\end{proposition}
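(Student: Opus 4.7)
\emph{Plan.} I would prove the proposition by a compactness/blow-up argument. Fix $\tau \in (0,1)$ and set $C_0 := 2\max\{C(n),\beta/\alpha\}$, where $C(n)$ is the constant in the interior gradient estimate for harmonic functions invoked below. Arguing by contradiction, if no $\eps_0(\tau) > 0$ with the claimed property exists, there are sequences $r_k, x_k, E_k, u_k$ with $B_{r_k}(x_k) \subset\!\subset \Omega$, $u_k$ a solution of \eqref{linear_eq}, one of (i)--(iii) holding at $(x_k, r_k)$ with $\eps_k \to 0$, and
\[
\int_{B_{\tau r_k}(x_k)}|Du_k|^2\,dx > C_0\tau^n \int_{B_{r_k}(x_k)}|Du_k|^2\,dx.
\]
Since \eqref{linear_eq} and the quotients in (i)--(iii) are scale invariant, translation and rescaling reduce to $x_k = 0$, $r_k = 1$; subtracting a constant and normalizing, also $\int_{B_1} u_k\,dx = 0$ and $\int_{B_1}|Du_k|^2\,dx = 1$. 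Poincar\'e then gives a uniform $W^{1,2}(B_1)$ bound, so up to subsequences $u_k \rightharpoonup u$ in $W^{1,2}(B_1)$ and strongly in $L^2(B_1)$, while $\chi_{E_k} \to \chi_{E_\infty}$ in $L^1(B_1)$, with $E_\infty$ equal a.e.\ to $\emptyset$, $B_1$, or $H \cap B_1$ for some half-space $H$ in cases (i), (ii), (iii), respectively.

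The first substantive step is to upgrade to strong $L^2$-convergence of the gradients on compact subsets of $B_1$. Uniform ellipticity of \eqref{linear_eq} makes this routine: testing \eqref{linear_eq} with $\phi = \eta^2(u_k - u)$ for a smooth cutoff $\eta \in C_c^\infty(B_1)$ equal to $1$ on the target ball, then expanding and using $u_k \to u$ in $L^2$, pointwise a.e.\ convergence of $\sigma_{E_k}$, and bounded convergence, one gets $\int \sigma_{E_k}\eta^2|Du_k - Du|^2\,dx \to 0$. Since $\sigma_{E_k}\geq \alpha$, this yields $Du_k \to Du$ in $L^2_{\mathrm{loc}}(B_1)$; in particular $\int_{B_\tau}|Du_k|^2\,dx \to \int_{B_\tau}|Du|^2\,dx$ and $\int_{B_1}|Du|^2\,dx \leq 1$.

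Passing to the limit in \eqref{linear_eq} (the coefficients are uniformly bounded and converge a.e.), the limit $u$ solves \eqref{linear_eq} with $E = E_\infty$. In cases (i) and (ii), $\sigma_{E_\infty}$ is constant on $B_1$, so $u$ is harmonic there and the interior gradient estimate gives $\int_{B_\tau}|Du|^2\,dx \leq C(n)\tau^n\int_{B_1}|Du|^2\,dx$. In case (iii), if the limit half-space $H$ passes through the origin, Lemma~\ref{monotonicity} applied to $(H,u)$ says that $\rho^{-n}\int_{B_\rho}\sigma_H|Du|^2\,dx$ is non-decreasing in $\rho$, hence
\[
\int_{B_\tau}|Du|^2\,dx \leq \frac{\beta}{\alpha}\tau^n\int_{B_1}|Du|^2\,dx;
\]
if instead $0\notin\partial H$, then $u$ is harmonic in a neighbourhood of $0$ and the harmonic estimate applies on a small ball centred there. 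In either case $\int_{B_\tau}|Du|^2\,dx \leq \max\{C(n),\beta/\alpha\}\tau^n$, which contradicts the inequality $\int_{B_\tau}|Du|^2\,dx \geq C_0\tau^n$ obtained in the limit from the standing hypothesis.

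The main obstacle is securing the strong $L^2$-convergence of the gradients: without it the normalization $\int_{B_1}|Du_k|^2 = 1$ is not preserved under weak limits and the contradiction collapses. The test-function computation above is the right tool and relies only on uniform ellipticity and the a.e.\ convergence of $\sigma_{E_k}$; the higher integrability from Lemma~\ref{gehring} is not strictly needed here, though it comfortably supplies equi-integrability if one prefers that route. Once strong convergence is available, Lemma~\ref{monotonicity} (for the half-space limit) and standard harmonic regularity (for the trivial limits) close the argument and produce an explicit constant $C_0$ of order $\beta/\alpha$.
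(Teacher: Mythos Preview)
Your compactness approach is viable and genuinely different from the paper's. The paper argues directly: in cases (i)--(ii) it compares $u$ with the harmonic extension $v$ of $u|_{\partial B_{r/2}}$, and in case (iii) with the minimizer $v$ of the half-space energy; in each case $\int_{B_{r/2}}|Du-Dv|^2$ is bounded by $\int_{B_{r/2}\cap(E\Delta G)}|Du|^2$ (with $G=\emptyset$, $B_{r/2}$, or $H$), and the higher integrability of Lemma~\ref{gehring} turns this into a power of $\eps_0$ times $\int_{B_r}|Du|^2$. Adding the decay of $\int|Dv|^2$ (from harmonicity or from Lemma~\ref{monotonicity}) gives the result with the explicit relation $\eps_0^{1-1/p}=\tau^n$. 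This quantitative $\eps_0(\tau)$ is exactly what the paper needs in Section~3 (Proposition~\ref{decay_est_2} and the Taylor-cone analysis). Your contradiction argument is cleaner and does not require Gehring, but the $\eps_0$ it produces is non-effective.

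There is, however, a gap in your treatment of case (iii) when the limiting half-space $H$ does not pass through the origin. Writing that ``$u$ is harmonic in a neighbourhood of $0$ and the harmonic estimate applies on a small ball centred there'' does not close the argument: if $d:=\dist(0,\partial H)<\tau$ the ball $B_\tau$ is not contained in the harmonic region, and even when $d\geq\tau$ the harmonic decay only gives $\int_{B_\tau}|Du|^2\leq C(n)(\tau/d)^n\int_{B_d}|Du|^2$, which does not yield the factor $\tau^n$ against $\int_{B_1}|Du|^2$ unless $d$ is bounded below. The repair is easy: let $y_0\in\partial H$ with $|y_0|=d$; if $d<1/2$ apply Lemma~\ref{monotonicity} at $y_0$ on balls up to $B_{1/2}(y_0)\subset B_1$, and when $d>\tau$ chain this with the harmonic decay from $B_\tau$ to $B_d\subset B_{2d}(y_0)$; if $d\geq 1/2$ simply use that $u$ is harmonic in $B_{1/2}$. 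This gives $\int_{B_\tau}|Du|^2\leq C(n)\frac{\beta}{\alpha}\,\tau^n\int_{B_1}|Du|^2$ uniformly in the position of $H$, and with a correspondingly larger $C_0$ your contradiction goes through.
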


\begin{proof}

Without loss of generality we may assume that $\tau < 1/2$. We first treat  the cases (i) and (ii). We fix a ball $B_{r}(x_0)\subset \! \subset \Omega$ and assume without loss of generality that $x_0 = 0$. Choose $v$ to be the harmonic function in $B_{r/2}$ with the boundary condition $v= u$ on $\partial B_{r/2}$. We  choose the test function $\vphi = v-u \in W_0^{1,2}(B_{r/2})$ in the equations 
\[
\int_{B_{r/2}} Dv \cdot D\vphi \, dx   = 0 
\]
and
\begin{equation}\label{decay1} \alpha \int_{B_{r/2} \setminus  E} \ Du \cdot D\vphi\, dx + \beta \int_{B_{r/2} \cap  E}  Du \cdot D\vphi \, dx = 0.
\end{equation} 
 We write the latter equation as
\[
\int_{B_{r/2}}  Du \cdot (Dv - Du) \, dx = - \frac{\beta -  \alpha }{\alpha } \int_{B_ {r/2} \cap E}  Du \cdot (Dv - Du)\, dx.
\]
We substract to this $ \int_{B_{r/2} } Dv \cdot (Dv - Du) \, dx   = 0$  and use H\"older's inequality  to deduce
\[
 \int_{B_{r/2} }  |Dv - Du|^2 \, dx \leq \frac{ (\beta -  \alpha )^2}{\alpha^2}\int_{B_{r/2}\cap E}  |Du|^2 dx.
\] 
By the higher integrability stated in Lemma \ref{gehring} we have
\begin{equation} \label{es_long1}
\begin{split}
 \int_{B_{\tau r}}  |Dv - Du|^2 \, dx &\leq \frac{ (\beta -  \alpha )^2}{\alpha^2}   |E \cap B_{r}|^{1 - 1/p} |B_{r}|^{1/p} \left( \medint_{B_{r}} |Du|^{2p} \right)^{1/p}   \\
&\leq C^{1/p}  \frac{ (\beta -  \alpha )^2}{ \alpha^2} \left( \frac{|E \cap B_{r}|}{| B_{r}|} \right)^{1 - 1/p}  \int_{B_{r}} |Du|^2 \, dx 
\end{split}
\end{equation} 
where $C$ and $p>1$ are from Lemma \ref{gehring}.  Similarly we deduce
\begin{equation} \label{es_long2}
\int_{B_{\tau r }}  |Dv - Du|^2 \, dx  \leq C^{1/p} \frac{ (\beta -  \alpha )^2}{\beta^2} \left( \frac{|E \setminus  B_{r}|}{| B_{r}|} \right)^{1 - 1/p} \int_{B_r} |Du|^2 \, dx.
\end{equation} 
On the other hand, since  $v$ is harmonic, we have
\[
\int_{B_{ \tau r}}  |Dv|^2 \, dx \leq  2^n\tau^n \int_{B_{r/2}}  |Dv|^2 \, dx \leq   2^n \tau^n \int_{B_{r/2}}  |Du|^2 \, dx.
\]
Hence,  we may estimate
\begin{equation} \label{es_long3}
\begin{split}
\int_{B_{\tau r} }  | Du|^2 \, dx &\leq 2 \int_{B_{ \tau r}}   |Dv - Du|^2 \, dx + 2 \int_{B_{\tau r}}   |Dv|^2 \, dx \\
&\leq 2 \int_{B_{\tau r}}   |Dv - Du|^2 \, dx + 2^{n+1} \tau^n  \int_{B_{r}}  |Du|^2 \, dx. 
\end{split}
\end{equation} 
If $\eps_0$ is such that $\eps_0^{1-\frac{1}{p}} = \tau^n$ then \eqref{es_long1}, \eqref{es_long2} and \eqref{es_long3} yield
\[
\int_{B_{\tau r} }  | Du|^2 \, dx \leq C \tau^n \int_{B_{r}}  |Du|^2 \, dx 
\]
for a constant $C$ depending only on $\beta/\alpha$ and $n$.

We are left with the  case (iii). Let $H$ be the half-space from the assumption. We choose  $v$ which minimizes the energy $\int_{B_{r/2}} \sigma_H(x) |Dv|^2 \, dx$ with the boundary condition  $v = u$ on $\partial B_{r/2}$. Hence 
\begin{equation}\label{decay2}
\beta \int_{B_{r/2} \cap H} Dv \cdot D \vphi \, dx + \alpha   \int_{B_{r/2} \setminus H} Dv \cdot D \vphi \, dx = 0 
\end{equation}
for every $\vphi \in W_0^{1,2}(B_{r/2})$. 
Lemma \ref{monotonicity} yields
\[
\int_{B_{\tau r}} |Dv|^2 \, dx \leq  \frac{\beta}{\alpha}  2^n \tau^{n} \int_{B_{{r/2}}}  |Dv|^2 \, dx .
\]
Moreover, from the minimality of $v$ it follows
\[
 \int_{B_{r/2}}  |Dv|^2 \, dx \leq \frac{\beta}{\alpha} \int_{B_{r/2}}  |Du|^2 \, dx.
\]
Hence, we have 
\begin{equation} \label{es_long4}
\int_{B_{\tau r} }  | Du|^2 \, dx \leq 2 \int_{B_{\tau r}}   |Dv - Du|^2 \, dx + 2^{n+1 } \left(\frac{\beta}{\alpha} \right)^2 \tau^{n} \int_{B_{r}}   |Du|^2 \, dx.
\end{equation}

Let us now rewrite the equation \eqref{decay1} satisfied by $u$ as
\[
\begin{split}
\beta \int_{B_{r/2} \cap H} Du \cdot D \vphi \, dx &+ \alpha   \int_{B_{r/2} \setminus H} Du \cdot D \vphi \, dx\\
&=(\beta-\alpha) \int_{B_{r/2} \cap (H \setminus E)} Du \cdot D \vphi \, dx-(\beta-\alpha) \int_{B_{r/2} \cap (E\setminus H)} Du \cdot D \vphi \, dx \,.
\end{split}
\]
Then, subtracting \eqref{decay2} from this equation and choosing $\varphi=u-v$ we get at once
\[
\int_{B_{\tau r}} |Du -Dv|^2 \, dx \leq  \left( \frac{\beta}{\alpha} -1 \right)^2 \int_{B_{r/2} \cap (E \Delta H)}  |Du|^2 \, dx
\]
and from Lemma \ref{gehring} we deduce
\begin{equation} \label{es_long5}
\int_{B_{\tau r} }  |Du -Dv|^2  \, dx \leq C^{1/p} \left( \frac{\beta}{\alpha}-1  \right)^2 \left( \frac{|(E\Delta H) \cap B_{r}|}{| B_{r}|} \right)^{1 - 1/p}  \int_{B_{r}}   | Du|^2 \, dx.
\end{equation}
The conclusion then   follows as in the previous cases.
\end{proof}

\section{Taylor  cones}

In this section we  study  critical configurations $(E,u)$, i.e., they satisfy \eqref{linear_eq} and \eqref{1st_var}. In particular, we are interested in those which are circular cones satisfying \eqref{1st_var} outside the vertex. It was shown in \cite{LHL} and \cite{RC} that there exist circular cones in $\R^3$
\[
E_{\theta_0} = \big{\{ }x \in \R^3 \mid x_3 > \frac{1}{ \tan \theta_0} \sqrt{x_1^2 + x_2^2} \, \big{\}},
\]
for $\theta_0 \in (0, \pi/2)$, which are critical. Indeed, one may find an associated elastic minimum given in spherical coordinates by
\[
u(\rho, \theta) = \sqrt{\rho} \cdot f(\theta),
\] 
where $\rho = \sqrt{x_1^2 + x_2^2 + x_3^2 }$ and $\theta$ is the angle formed by the vector $x \in \R^3$ with the positive $x_3$ semi-axis. Denote by $P_{\frac{1}{2}}$   the Legendre function of the first kind of order $1/2$ which solves  the equation
\[
P''(t) (1- t^2) - 2t P'(t) + \frac{3}{4}P(t)= 0, 	\quad t \in (-1,1).
\]
Then the function  $f$ is given by 
\[
f(\theta) = \left\{
\begin{aligned}
 &P_{\frac{1}{2}}(- \cos \theta_0)  P_{\frac{1}{2}}( \cos \theta), &&  \theta \in [0, \theta_0] \\
 &P_{\frac{1}{2}}( \cos \theta_0)  P_{\frac{1}{2}}(- \cos \theta), &&  \theta \in [\theta_0, \pi].\\
\end{aligned}
\right.
\]
The transmission condition \eqref{transition} then reads as
\[
\beta P_{\frac{1}{2}}(- \cos \theta_0)  P_{\frac{1}{2}}'( \cos \theta_0) + \alpha  P_{\frac{1}{2}}(\cos \theta_0)  P_{\frac{1}{2}}'(- \cos \theta_0) = 0. 
\]
It can be proved that there exists a critical threshold $\lambda_1 \approx 17.59$ such that this equation has no solutions if $\frac{\beta}{\alpha} < \lambda_1$ and it has two solutions in $(0, \pi/2)$ if $\frac{\beta}{\alpha} > \lambda_1 $.

In \cite{SLB} estimates of the angles corresponding to critical cones are given by a different approach. Although the known results  give sharp estimates for the angles which allow  existence of critical spherical cones, they do not give any rigorous answer whether there exists a range of angles where no critical spherical cones appear.

We apply the regularity from Section 2 to prove that only cones with certain angles are possible. This estimate is independent of $\gamma$, which reflects the fact that the perimeter has only regularizing effect. This result rigorously answers to the question connected to Taylor Cones, of why cones of certain angles do not appear. The result also generalizes to convex cones $E$ whose base  is uniformly convex and $C^2$-regular. Since the result is purely local, with no loss of generality we set  $\Omega = \R^n$.

We begin by revisiting the decay estimate proved in Proposition \ref{decay_est} with an explicit choice of the constants. 

\begin{proposition} 
\label{decay_est_2}
Let $u \in W_{loc}^{1,2}(\R^n)$ be a solution of \eqref{linear_eq}. There exist $\delta_1, \sigma >0$ and $\vartheta \in (0,1)$, depending only on the dimension $n$ and the ratio $\frac{\beta}{\alpha}$, such that if one of the following conditions hold
\begin{enumerate}
\item[(i)] $|E \cap B_{1}|< \delta_1|B_1|$,  
\item[(ii)] $|B_1 \setminus E| <  \delta_1|B_1|$,  
 \item[(iii)] there exists a half-space $H$ such that   $ |(E \Delta H ) \cap B_1| < \delta_1|B_1|$,
\end{enumerate}
then we have
\[
\int_{B_{\vartheta}} |Du|^{2}\, dx \leq \vartheta^{n-1 +  \sigma} \int_{B_{1}}  |Du|^2 \, dx.
\]
\end{proposition}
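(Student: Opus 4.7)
The plan is to deduce this directly from Proposition \ref{decay_est}, which is essentially the same statement but with a generic constant $C_0 = C_0(n, \beta/\alpha)$ on the right-hand side and an arbitrary parameter $\tau \in (0,1)$. The point of Proposition \ref{decay_est_2} is simply to absorb $C_0$ into the decay exponent, at the cost of fixing $\tau$ once and for all, so that the final exponent is strictly larger than $n-1$ (the scaling of the perimeter). This strict gain over $\rho^{n-1}$ is what will drive the Taylor cone argument in the sequel.

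Concretely, I would first fix any $\sigma \in (0,1)$ (for definiteness $\sigma = 1/2$), and apply Proposition \ref{decay_est} with $x_0 = 0$, $r = 1$ (which is admissible since $u \in W^{1,2}_{\mathrm{loc}}(\R^n)$) and $\tau = \vartheta$ to be chosen. Under any of the hypotheses (i)--(iii) with threshold $\eps_0(\vartheta)$, that proposition yields
\[
\int_{B_{\vartheta}} |Du|^{2}\, dx \leq C_0 \vartheta^{n} \int_{B_{1}} |Du|^{2}\, dx,
\]
with $C_0$ depending only on $n$ and $\beta/\alpha$. To turn the prefactor $C_0 \vartheta^{n}$ into $\vartheta^{n-1+\sigma}$ it suffices to require $C_0 \vartheta^{1-\sigma} \leq 1$, which, since $1-\sigma > 0$, is achieved by choosing $\vartheta \in (0,1)$ small enough, e.g.\ $\vartheta := \min\{\tfrac12, C_0^{-1/(1-\sigma)}\}$. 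Finally set $\delta_1 := \eps_0(\vartheta)$, where $\eps_0(\cdot)$ is the function provided by Proposition \ref{decay_est}.

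Since $\sigma$, $\vartheta$, and $C_0$ all depend only on $n$ and $\beta/\alpha$, so does $\delta_1$, which is the dependence the statement requires. There is essentially no obstacle here: the only thing one must verify is that the hypotheses of Proposition \ref{decay_est} are scale-invariant (both the density conditions and the $L^2$-decay inequality are), so applying it on $B_1$ is without loss of generality; for general balls one rescales $u(x) \mapsto u(x_0 + r x)$.
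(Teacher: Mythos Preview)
Your argument is correct and is conceptually the same as the paper's: once Proposition~\ref{decay_est} gives a decay factor $C_0\tau^n$, choosing $\tau=\vartheta$ small enough forces $C_0\vartheta^n\le\vartheta^{n-1+\sigma}$, and then $\delta_1:=\eps_0(\vartheta)$ does the job. The paper, however, does not simply invoke Proposition~\ref{decay_est} as a black box; it reopens the proof and uses the intermediate inequalities \eqref{es_long1}--\eqref{es_long5} directly, so as to solve explicitly for the largest admissible $\vartheta$ in each case (e.g.\ $\vartheta=\frac{n-1}{2^{n+1}n}(\alpha/\beta)^2$ in case~(iii)) and to obtain a concrete formula for $\delta_1$ (see \eqref{explicit5}). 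This extra effort is not needed for the bare statement, but it is what justifies the Remark following Theorem~\ref{mainthm2}, where the constants $\delta_0$ and $\lambda_0$ are estimated explicitly. Your shortcut is perfectly valid if one is content with existence of $\vartheta,\sigma,\delta_1$ depending only on $n$ and $\beta/\alpha$; the paper's version buys quantitative control.
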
 

\begin{proof}
We first deal the case (i). We recall that from \eqref{es_long1} and \eqref{es_long3} it follows that if $\tau \in (0, \frac{1}{2})$ then we have
\[
\int_{B_{\tau}} |Du|^{2}\, dx \leq 2\left( C^{1/p} \frac{(\beta- \alpha)^2}{\alpha^2 } \delta_1^{1-1/p}+ 2^n\tau^n \right)\int_{B_{1}} |Du|^{2}\, dx
\]
where $C$ and $p$ are the constants from Lemma \ref{gehring}. Note that this inequality is trivially satisfied if $\tau \in (\frac{1}{2},1)$. Let us denote by $\chi$ the largest number such that the equation 
\begin{equation}
\label{explicit1.1}
2\chi +  2^{n+1} \vartheta^{n} = \vartheta^{n-1}
\end{equation}
has  a solution for some $\vartheta>0$.  We may easily solve $\vartheta$
\[
\vartheta= \frac{n-1}{ 2^{n+1}n}.
\]
Then if 
\[
C^{1/p} \frac{(\beta- \alpha)^2}{\alpha^2 }  \delta_1^{1-1/p}  < \chi  
\]
we get 
\begin{equation}
\label{theta}
\int_{B_{\vartheta  } }  | Du|^2 \, dx \leq \vartheta^{n-1 + 2 \sigma} \int_{B_{1}}  |Du|^2 \, dx,
\end{equation}
for some $\sigma>0$. Similarly in the case (ii) we have \eqref{theta} provided that we choose $\chi$ and $\vartheta$ as in \eqref{explicit1.1} and $\delta_1$ such that 
\[
C^{1/p} \frac{(\beta- \alpha)^2}{\beta^2 }  \delta_1^{1-1/p}  < \chi.  
\]
In the case (iii) we choose  $\chi$ the largest number such that  the equation 
\[
2\chi +   2^{n+1} \left( \frac{\beta}{\alpha} \right)^2 \vartheta^n = \vartheta^{n-1}
\]
has a solution $\vartheta>0$. We may again solve $\vartheta$ 
\begin{equation}
\label{theta2}
\vartheta= \frac{n-1}{2^{n+1 }n} \left( \frac{\alpha}{\beta}\right)^2.
\end{equation}
Arguing as before, the estimate \eqref{theta} follows from \eqref{es_long4} and \eqref{es_long5} if we choose $\delta_1$ such that 
\begin{equation}
\label{explicit4}
 C^{1/p}  \left( \frac{\beta}{\alpha} \right)^2   \delta_1^{1 - 1/p} < \chi.
\end{equation}
By comparing the above three cases the claim obviously follows by choosing 
\begin{equation}
\label{explicit5}
\delta_1^{1-1/p} < \left(\frac{\alpha}{\beta}\right)^2 C^{-1/p} \chi
\end{equation}
where $\chi = \frac{1}{2}\vartheta^{n-1}- 2^n\left(\frac{\beta}{\alpha}\right)^2 \vartheta^n$ and $\vartheta$ is from \eqref{theta2}.
\end{proof}

We recall the following result (see the proof of \cite[Theorem 3.1]{EF})
\begin{proposition}
\label{espfus}
Let $u \in W^{1,2}(\Omega)$ be a solution of \eqref{linear_eq}. Let  $x_0 \in \Omega$ and $r>0$ such that $B_{r}(x_0) \subset \! \subset \Omega$. There exist $\lambda_0 >1, c>0$ and $\sigma  >0$, depending only on  $n$,  such that  if  
\[
\frac{\beta}{ \alpha} <   \lambda_0 
\]
then  for every $\rho < r$
\[
\int_{B_{\rho}(x_0)} |Du|^{2}\, dx \leq c \left(\frac{\rho}{r}\right)^{n-1 +  \sigma} \int_{B_{r}(x_0)}  |Du|^2 \, dx.
\]
\end{proposition}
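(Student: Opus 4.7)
The plan is to treat this as a perturbation of the constant-coefficient (Laplace) case: when $\beta/\alpha$ is close to $1$ the operator in \eqref{linear_eq} is nearly $\Delta$, so $Du$ should inherit the sharp $\rho^{n}$-decay enjoyed by gradients of harmonic functions, up to a small error coming from the jump in $\sigma_E$. Let us fix $x_0$, assume without loss of generality $x_0=0$, and for every $s\leq r$ introduce the harmonic replacement $v\in u+W^{1,2}_0(B_s)$ solving $\Delta v=0$ in $B_s$ with $v=u$ on $\partial B_s$. Using $\varphi=u-v$ as test function in both $\int Dv\cdot D\varphi=0$ and \eqref{linear_eq} rewritten as $\int Du\cdot D\varphi=-\big(\tfrac{\beta}{\alpha}-1\big)\int_{E}Du\cdot D\varphi$, one obtains after a Cauchy--Schwarz estimate the comparison bound
\[
\int_{B_s}|Du-Dv|^2\,dx\leq \Big(\tfrac{\beta}{\alpha}-1\Big)^2\int_{B_s}|Du|^2\,dx.
\]

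Combining this with the standard gradient decay for harmonic functions, $\int_{B_\rho}|Dv|^2\leq c_n(\rho/s)^n\int_{B_s}|Dv|^2$, and the obvious minimality $\int_{B_s}|Dv|^2\leq\int_{B_s}|Du|^2$, the triangle inequality yields, for all $0<\rho\leq s\leq r$,
\[
\int_{B_\rho}|Du|^2\,dx\leq\Big[\,2c_n\Big(\frac{\rho}{s}\Big)^{n}+2\Big(\tfrac{\beta}{\alpha}-1\Big)^2\Big]\int_{B_s}|Du|^2\,dx.
\]
This is exactly the setup for the classical Campanato/Giaquinta iteration lemma: if a non-decreasing function $\phi$ satisfies $\phi(\rho)\leq A[(\rho/s)^n+\eta]\,\phi(s)$ for all $\rho\leq s\leq r$, then for every exponent $\mu<n$ there exists $\eta_0=\eta_0(n,A,\mu)>0$ such that whenever $\eta<\eta_0$ one has $\phi(\rho)\leq C(\rho/r)^\mu\phi(r)$ with $C=C(n,A,\mu)$.

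Applying this with $\mu=n-1+\sigma$ for some $\sigma=\sigma(n)>0$ small enough (for instance $\sigma=1/2$), one fixes $\eta_0=\eta_0(n)$ and then defines
\[
\lambda_0:=1+\sqrt{\eta_0/2}\,,
\]
so that the hypothesis $\beta/\alpha<\lambda_0$ guarantees $2(\beta/\alpha-1)^2<\eta_0$ and the iteration applies, producing the claimed estimate for $\rho\leq r/2$; the case $r/2<\rho<r$ is trivial up to adjusting $c$ by a factor $2^{n-1+\sigma}$. The only genuinely delicate step is the first one, namely extracting the factor $(\beta/\alpha-1)^2$ rather than merely $(\beta/\alpha-1)$ in the energy comparison—this is what makes $\lambda_0$ strictly larger than $1$ and is the reason the closeness condition is a two-sided smallness on $\beta/\alpha-1$ and not a one-sided inequality; once this is in place, the dimensional dependence of $c_n$ (hence of $\eta_0$ and of $\lambda_0$) is harmless because the whole construction involves no other parameters of the problem.
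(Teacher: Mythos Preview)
Your argument is correct. The paper does not supply its own proof of this proposition but simply cites \cite{EF}; however, the harmonic-comparison-plus-Campanato-iteration scheme you use is precisely the technique the paper itself employs in the proof of Proposition~\ref{decay_est} (cases (i)--(ii)), and is what \cite{EF} does as well.

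One small correction to your closing commentary: the appearance of the \emph{squared} factor $(\beta/\alpha-1)^2$ is not what makes $\lambda_0$ strictly larger than $1$ --- a linear factor $(\beta/\alpha-1)$ would equally vanish as $\beta/\alpha\to1$ and still produce a threshold $\lambda_0>1$ via the iteration lemma. The square comes for free from Cauchy--Schwarz (after dividing by $\|Du-Dv\|_{L^2}$) and merely improves the numerical value of $\lambda_0$; the explicit formula \eqref{estimating_gamma} quoted in the paper corresponds to the still sharper comparison factor $\frac{\beta-\alpha}{\beta+\alpha}$ in place of your $\frac{\beta}{\alpha}-1$, obtained by writing $\sigma_E=\frac{\alpha+\beta}{2}+\frac{\beta-\alpha}{2}(\chi_E-\chi_{E^c})$ before testing against $u-v$.
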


We remark that we may estimate the number $\lambda_0$  (see  \cite[(20)-(21)]{EF}) by 
\begin{equation} \label{estimating_gamma}
\lambda_0 = \frac{n^n + n(n-1)^{n-1} - (n-1)^{n}}{n^n - n(n-1)^{n-1} + (n-1)^{n}}.
\end{equation}

\begin{proof}[\textbf{Proof of Theorem \ref{mainthm2}}]
 Let us recall the Euler-Lagrange equation for the critical set $E_{\theta}$ 
\[
\gamma H_{\partial E_{\theta}} + \beta |D u_{\beta}|^2  - \alpha |D u_{\alpha}|^2 = \lambda \quad \text{ on }\, \partial E_{\theta} \setminus \{0\}.
\]
This can be rewritten as 
\begin{equation}
\label{euler_re}
\gamma H_{\partial E_{\theta}} + \beta |\partial_{\nu}u_{\beta}|^2  - \alpha |\partial_{\nu} u_{\alpha}|^2 + (\beta - \alpha)|D_{\tau}u|^2= \lambda \quad \text{ on }\, \partial E_{\theta} \setminus \{0\},
\end{equation}
where $D_{\tau}u$ is the tangential gradient of $u$ on $\partial E_{\theta} \setminus \{0\}$. From the transmission condition \eqref{transition}  we deduce
\begin{equation}
\label{simple}
\beta |\partial_{\nu} u_{\beta}|^2  < \alpha |\partial_{\nu}  u_{\alpha}|^2 \quad \text{ on }\, \partial E_{\theta}\setminus \{0\}.
\end{equation}
Since $H_{\partial E_{\theta}}(x) = (n-1)\cos \theta \cdot |x|^{-1}$  for $x \in \partial E \setminus \{0\}$, we obtain from the Euler-Lagrange equation \eqref{euler_re}, from \eqref{simple}, and from the transmission condition \eqref{transition} that
\begin{equation}
\label{curv_bound}
|\partial_{\nu}u_{\beta}(x)| \geq \frac{c}{\sqrt{|x|}} \qquad \text{on }\, \partial E \setminus \{0\},
\end{equation}
for some constant $c>0$. In particular, since the set $ \partial E \setminus \{0\}$ is connected, this implies that $\partial_{\nu}u_{\beta}$ does not change sign on $\partial E \setminus \{0\}$, and we may thus  assume it to be  positive.

Let us fix $\rho >0$ and choose a cut-off function $\zeta \in C_0^{\infty}(B_{\rho})$ such that $\zeta \equiv  1$ in $B_{\rho/2}$ and $|D \zeta| \leq 4/\rho$. Since $u_{\beta}$ is harmonic in $E$, we obtain  from  \eqref{curv_bound} and by  integrating  by parts that  
\[
\int_{E \cap B_{\rho}} \langle D u_{\beta},  D \zeta \rangle \, dx = \int_{\partial E \cap B_{\rho}}  \partial_{\nu}u_{\beta}\, \zeta \, d \Ha^{n-1} \geq c \int_{\partial E \cap B_{\rho/2}}  |x|^{-1/2}\, \Ha^{n-1} \geq \tilde{c} \rho^{n -3/2}.
\]
On the other hand H\"older's inequality implies
\[
\begin{split}
\int_{E \cap B_{\rho}} \langle D u_{\beta},  D \zeta \rangle \, dx &\leq\left( \int_{E \cap B_{\rho}} |D \zeta|^2 \, dx \right)^{1/2} \left( \int_{E \cap B_{\rho}} |D u_{\beta}|^2 \, dx \right)^{1/2}  \\
&\leq C \rho^{n/2 -  1} \left( \int_{E \cap B_{\rho}} |D u_{\beta}|^2 \, dx \right)^{1/2}.
\end{split}
\]
Therefore 
\[
 \int_{E \cap B_{\rho}} |D u_{\beta}|^2 \, dx  \geq c_0 \rho^{n-1}
\]
for some constant $c_0>0$. The claim now follows  by a standard iteration argument  from Proposition~\ref{decay_est_2} (i) and  (iii) and from Proposition \ref{espfus}.
 \end{proof}

\begin{remark}
The constant $\delta_0$ can be explicitly estimated in terms of the constant $\delta_1$ from Proposition~\ref{decay_est_2}, since the spherical sector has the volume
\[
|E_{\theta} \cap B_1|= \omega_{n-1}\left( \int_0^{\theta} \sin^{n} t \, dt + \frac{\sin^{n-1} \theta \cos \theta}{n}\right),
\]
where $\omega_{n-1}$ is the volume of the $(n-1)$-dimensional unit ball. The formula for $\delta_1$ is given by \eqref{explicit5}.  The constant $\lambda_0$ is estimated in \eqref{estimating_gamma}. 
\end{remark}

Note that in dimension 2 the Euler-Lagrange equation \eqref{1st_var} reduces to
$$
\beta |D u_{\beta}|^2  - \alpha |D u_{\alpha}|^2 = \lambda \quad \text{ on }\, \partial E_{\theta} \setminus \{0\}
$$
and it is not clear to us if this weaker information is enough to establish Theorem~\ref{mainthm2} also in this case. However, as we already mentioned earlier, for $n\geq3$, the proof of  that theorem can be easily generalized to more general cones.
\begin{remark}
 Let $K \subset \R^{n-1}$ be an open,  uniformly convex and $C^2$-regular set such that $0 \in K$. Theorem~\ref{mainthm2} can be generalized to  conical sets of the form
\[
E = \{ \lambda(x', 1) \in \R^n \mid x' \in K, \,\, \lambda \geq 0\}.
\]
\end{remark}

\section{Regularity of minimizers}

Throughout this section the dimension $n$ and the constants $\alpha, \beta$ and $\gamma$ will remain fixed. Thus we denote by $C$ a generic constant depending on these quantities and whose value may change from line to line. On the other hand special constants will be numbered and their dependence on other quantities will be explicitly mentioned.  Before proceeding in the regularity proof we recall the following result which was proved in \cite[Theorem 1]{EF}.
\begin{theorem} \label{fusco_esposito}
Let $(E,u)$ be a minimizer of  problem  $(P_c)$. There exists a constant $\Lambda$ such that $(E,u)$ is also a minimizer of the penalized functional
\[
\F_{\Lambda}(F, v) = \gamma P(F, \Omega) +  \int_{\Omega} \sigma_F (x)|D v|^2 \, dx + \Lambda \big| |F| - |E|\big|
\]
among all $(F,v)$ such that $v = u$ on $\partial \Omega$.
\end{theorem}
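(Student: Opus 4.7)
The plan is to argue by contradiction, converting the volume constraint of $(P_c)$ into the claimed penalty through a volume-fixing diffeomorphism. Assume no such $\Lambda$ exists; then for each integer $k \ge 1$ one can find a competitor $(F_k, v_k)$ with $v_k = u$ on $\partial \Omega$ satisfying
\[
\F(F_k, v_k) + k\,\big| |F_k|-|E|\big| < \F(E,u). \qquad (\star)
\]
In particular $\F(F_k,v_k) \le \F(E,u)$, so the perimeters $P(F_k,\Omega)$ and the Dirichlet integrals $\int_\Omega \sigma_{F_k}|Dv_k|^2\,dx$ are uniformly bounded and $\big||F_k|-|E|\big| \to 0$; without loss of generality I replace $v_k$ by the elastic equilibrium $u_{F_k}$. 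By $BV$-compactness and lower semicontinuity of the total energy, along a subsequence $F_k$ converges in $L^1(\Omega)$ to a set $F_\infty$ of finite perimeter with $|F_\infty|=|E|$, and $(F_\infty, u_{F_\infty})$ is itself a minimizer of $(P_c)$.

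Next I construct a volume-corrected competitor $(\tilde F_k, \tilde v_k)$ with $|\tilde F_k|=|E|$, $\tilde v_k = u$ on $\partial\Omega$, and
\[
\F(\tilde F_k,\tilde v_k) \le \F(F_k,v_k) + C\, \big||F_k|-|E|\big|, \qquad (\star\star)
\]
for a constant $C$ independent of $k$. Fix $x_0 \in \rbdry F_\infty \cap \Omega$ and choose $X \in C_c^\infty(\Omega;\R^n)$ supported in a small ball around $x_0$ and essentially proportional to $\nu_{F_\infty}(x_0)$ there, so that by the generalised divergence theorem
\[
\int_{F_\infty}\diver X \, dx = \int_{\rbdry F_\infty} X\cdot \nu_{F_\infty} \, d\Ha^{n-1} =: c_0 > 0.
\]
Let $\Phi_t$ be the flow of $X$; then $\Phi_t=\mathrm{id}$ outside $\mathrm{supp}\,X$, and since $\chi_{F_k}\to\chi_{F_\infty}$ in $L^1$ the derivative at $t=0$ of the smooth map $t\mapsto |\Phi_t(F_k)|$ equals $\int_{F_k}\diver X\,dx$, which is bounded below by $c_0/2$ for large $k$. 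By the implicit function theorem there is $t_k$ with $|t_k|\le C_0\,\big||F_k|-|E|\big|$ such that $|\Phi_{t_k}(F_k)|=|E|$ (either sign of the volume defect is handled by taking $\pm t$). Setting $\tilde F_k := \Phi_{t_k}(F_k)$ and $\tilde v_k := v_k\circ\Phi_{t_k}^{-1}$, which agrees with $u$ on $\partial\Omega$, a standard change of variables gives
\[
P(\tilde F_k,\Omega) \le (1+C|t_k|)\, P(F_k,\Omega), \qquad \int_\Omega \sigma_{\tilde F_k}|D\tilde v_k|^2\,dx \le (1+C|t_k|) \int_\Omega \sigma_{F_k}|Dv_k|^2\,dx,
\]
with $C$ depending only on $X$, and $(\star\star)$ follows from the uniform bounds on perimeter and Dirichlet integral.

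To conclude, since $|\tilde F_k|=|E|$ and $\tilde v_k = u$ on $\partial\Omega$, minimality of $(E,u)$ for $(P_c)$ forces $\F(E,u)\le \F(\tilde F_k,u_{\tilde F_k})\le \F(\tilde F_k,\tilde v_k)$. Combining with $(\star\star)$ and $(\star)$ one obtains
\[
\F(E,u) \le \F(F_k,v_k) + C\,\big||F_k|-|E|\big| < \F(E,u) + (C-k)\,\big||F_k|-|E|\big|,
\]
which is impossible as soon as $k>C$ and $\big||F_k|-|E|\big|>0$; the alternative $|F_k|=|E|$ in $(\star)$ would directly contradict minimality. The delicate step is the construction of $X$ guaranteeing $\int_{F_\infty}\diver X\neq 0$ together with the Lipschitz/Jacobian bookkeeping for $\Phi_t$ and $\Phi_t^{-1}$: one must verify that the constants controlling $\det D\Phi_t$, $\nabla\Phi_t$ and $\nabla\Phi_t^{-1}$ are uniform for $|t|$ in a fixed neighborhood of $0$, so that the constant $C$ in $(\star\star)$ really depends only on $E$, $u$ and $\Omega$.
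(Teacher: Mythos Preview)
The paper does not give its own proof of this statement; it simply recalls the result from \cite[Theorem~1]{EF}. Your contradiction argument via a volume-fixing flow is correct and is precisely the strategy used in that reference: assume the penalized inequality fails for every $\Lambda$, extract a limit $F_\infty$ with $|F_\infty|=|E|$, build a compactly supported vector field $X$ near a point of $\partial^*F_\infty\cap\Omega$ with $\int_{F_\infty}\diver X\neq0$, and use the flow of $X$ to restore the volume of each $F_k$ at a cost linear in $\big||F_k|-|E|\big|$, contradicting $(\star)$ for $k$ large. One small point worth making explicit is that $\partial^*F_\infty\cap\Omega\neq\emptyset$; this follows once $\Omega$ is connected (or more generally once $F_\infty$ is not a union of connected components of $\Omega$), since $0<|F_\infty|=d<|\Omega|$. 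With that caveat your write-up matches the cited proof.
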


Motivated by the previous theorem we give the following definition. To this aim we denote for a set $E$ with  finite perimeter in  $\Omega$  and a function $u \in W^{1,2}(\Omega)$ by
\[
\F(E, u; U) = \gamma P(E,U) + \int_U \sigma_E |D u|^2\, dx
\]
the energy of the pair $(E,u)$  in an open set $U \subset \! \subset \Omega$.  We say that  a pair $(E,u)$   is a \emph{$\Lambda$-minimizer of $\F$ in $\Omega$} if for every $B_r(x_0)  \subset  \Omega$ and every pair $(F,v)$, where $F$ is a set  of finite perimeter   with $F \Delta E \subset \! \subset B_r(x_0)$ and $v-u \in W_0^{1,2}(B_r(x_0))$, we have
\[
\F(E, u; B_r(x_0)) \leq \F(F, v; B_r(x_0)) + \Lambda |F \Delta E|. 
\]
Note that any minimizing pair $(E,u_E)$ of the constrained problem $(P_c)$ is a $\Lambda$-minimizer of $\F$ for some $\Lambda$. Since $\Lambda$ will be fixed from now on  the dependence of the constants on $\Lambda$ will not be highlighted.

\begin{lemma}
\label{decay_energy}
Let $(E,u)$ be a $\Lambda$-minimizer of $\F$  in $\Omega$. For every $\tau \in (0,1)$ there exists $\eps_1 = \eps_1(\tau) >0$ such that if  $B_r(x_0) \subset \Omega$ and $P(E; B_r(x_0)) < \eps_1 r^{n-1}$ then
\[
\F(E,u; B_{\tau r}(x_0)) \leq C_1 \tau^n \left( \F(E,u; B_{ r}(x_0)) +   r^n \right)
\] 
for some constant $C_1$ independent of $\tau$ and $r$.
\end{lemma}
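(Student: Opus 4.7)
The plan is to combine the Dirichlet decay from Proposition \ref{decay_est} with a comparison argument exploiting $\Lambda$-minimality. The starting observation is that the hypothesis $P(E;B_r(x_0))<\eps_1 r^{n-1}$ forces one of $E$ or $\Omega\setminus E$ to have small volume fraction in $B_r(x_0)$. Indeed, by the relative isoperimetric inequality in the ball,
\[
\min\bigl(|E\cap B_r(x_0)|,|B_r(x_0)\setminus E|\bigr)\leq C\,P(E;B_r(x_0))^{n/(n-1)}\leq C\,\eps_1^{1/(n-1)}\,r\,P(E;B_r(x_0))\leq C\,\eps_1^{n/(n-1)}\,r^n.
\]
Say the first is smaller. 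If $\eps_1=\eps_1(\tau)$ is chosen so that $C\eps_1^{n/(n-1)}/\omega_n<\eps_0(\tau)$, where $\eps_0$ is from Proposition \ref{decay_est}(i), that proposition yields at once
\[
\int_{B_{\tau r}(x_0)}|Du|^2\,dx\leq C_0\,\tau^n\int_{B_r(x_0)}|Du|^2\,dx,
\]
which controls the Dirichlet part of $\F(E,u;B_{\tau r}(x_0))$ up to the factor $\beta/\alpha$.

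The perimeter part is handled by testing $\Lambda$-minimality against a competitor that removes $E$ from a slightly smaller ball. By Fubini applied to $\int_{\tau r}^{r}\Ha^{n-1}(E\cap\partial B_\rho(x_0))\,d\rho=|E\cap(B_r(x_0)\setminus B_{\tau r}(x_0))|$, there exists $\rho^\ast\in(\tau r,r)$ with
\[
\Ha^{n-1}(E\cap\partial B_{\rho^\ast}(x_0))\leq\frac{|E\cap B_r(x_0)|}{(1-\tau)r}.
\]
Take $F=E\setminus B_{\rho^\ast}(x_0)$ and let $v$ agree with $u$ outside $B_{\rho^\ast}(x_0)$ and be the harmonic extension of $u|_{\partial B_{\rho^\ast}(x_0)}$ inside. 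Then $F\Delta E\subset\!\subset B_r(x_0)$, $v-u\in W^{1,2}_0(B_r(x_0))$, and $\sigma_F=\alpha$ in $B_{\rho^\ast}(x_0)$ since $F$ is empty there. The $\Lambda$-minimality inequality reduces, after cancelling the pieces outside $B_{\rho^\ast}(x_0)$, to
\[
\gamma P(E;B_{\rho^\ast}(x_0))+\int_{B_{\rho^\ast}(x_0)}\sigma_E|Du|^2\,dx\leq\gamma\,\Ha^{n-1}(E\cap\partial B_{\rho^\ast}(x_0))+\alpha\int_{B_{\rho^\ast}(x_0)}|Dv|^2\,dx+\Lambda|E\cap B_{\rho^\ast}(x_0)|.
\]
Since $\sigma_E\geq\alpha$ and $v$ minimizes $\int|Dw|^2$ with boundary data $u$, the Dirichlet terms on the two sides cancel to leave $\gamma P(E;B_{\rho^\ast}(x_0))\leq\gamma\Ha^{n-1}(E\cap\partial B_{\rho^\ast}(x_0))+\Lambda|E\cap B_{\rho^\ast}(x_0)|$.

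The final step is to cash in the smallness. Using the first displayed inequality above, the Fubini bound gives
\[
\Ha^{n-1}(E\cap\partial B_{\rho^\ast}(x_0))\leq\frac{C\eps_1^{1/(n-1)}}{1-\tau}\,P(E;B_r(x_0)),
\]
and $|E\cap B_r(x_0)|\leq C\eps_1^{n/(n-1)}r^n$. Shrinking $\eps_1=\eps_1(\tau)$ further so that both $C\eps_1^{1/(n-1)}/(1-\tau)\leq\tau^n$ and $C\eps_1^{n/(n-1)}\leq\tau^n$ hold, we obtain
\[
\gamma P(E;B_{\tau r}(x_0))\leq\gamma P(E;B_{\rho^\ast}(x_0))\leq\tau^n\bigl(\gamma P(E;B_r(x_0))+\Lambda r^n\bigr),
\]
and adding this to the Dirichlet bound produces the claim with $C_1$ depending only on $n$, $\alpha$, $\beta$, $\gamma$, $\Lambda$. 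The main obstacle I expect is precisely the bookkeeping in this last step: the $(1-\tau)^{-1}$ loss from Fubini must be absorbed by the isoperimetric gain $\eps_1^{1/(n-1)}$ in order to produce a factor $\tau^n$ rather than merely $\tau^{n-1}$ on the right; the case where $|B_r(x_0)\setminus E|$ is the smaller side is handled identically using Proposition \ref{decay_est}(ii) and the competitor $F=E\cup B_{\rho^\ast}(x_0)$.
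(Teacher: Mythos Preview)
Your argument in the case $|E\cap B_r(x_0)|$ small is correct and close in spirit to the paper's proof. The difficulty is that the two cases are \emph{not} symmetric, and your claim that the case $|B_r(x_0)\setminus E|$ small ``is handled identically'' fails at the Dirichlet cancellation step. With $F=E\cup B_{\rho^\ast}(x_0)$ one has $\sigma_F=\beta$ inside $B_{\rho^\ast}(x_0)$, so the comparison inequality reads
\[
\gamma P(E;B_{\rho^\ast})+\int_{B_{\rho^\ast}}\sigma_E|Du|^2\leq\gamma\,\Ha^{n-1}(\partial B_{\rho^\ast}\setminus E)+\beta\int_{B_{\rho^\ast}}|Dv|^2+\Lambda|B_{\rho^\ast}\setminus E|.
\]
To drop the Dirichlet terms you would need $\beta\int_{B_{\rho^\ast}}|Dv|^2\leq\int_{B_{\rho^\ast}}\sigma_E|Du|^2$, but harmonicity only gives $\beta\int|Dv|^2\leq\beta\int|Du|^2$, while $\int\sigma_E|Du|^2\leq\beta\int|Du|^2$ goes the wrong way. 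What survives on the right is the uncontrolled term $(\beta-\alpha)\int_{B_{\rho^\ast}\setminus E}|Du|^2$; since your $\rho^\ast$ ranges over $(\tau r,r)$, this cannot be absorbed into $\tau^n\int_{B_r}|Du|^2$.

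The paper avoids this asymmetry by a different organization: it selects $\rho\in(\tau r,2\tau r)$ rather than $(\tau r,r)$, keeps $v=u$, and makes no attempt to cancel the Dirichlet terms. The right-hand side then carries $\beta\int_{B_\rho}|Du|^2\leq\beta\int_{B_{2\tau r}}|Du|^2$, which is already at the small scale and is bounded by $C\tau^n\int_{B_r}|Du|^2$ via Proposition~\ref{decay_est} applied with $2\tau$. This works uniformly in both cases. Your argument is easily repaired by adopting the same interval $(\tau r,2\tau r)$ for the Fubini slice and dropping the harmonic replacement; the cancellation trick (which is what forced you to take $\rho^\ast$ up to $r$ in the first place) is then unnecessary.
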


\begin{proof}
If we replace $E$ by $\frac{E-x_0}{r}$, $u$ by $y \to r^{-1/2}u(x_0 + ry)$ and $\Lambda$ by $\Lambda r$ we may assume that $r=1$, $x_0 = 0$ and $(E,u)$ is a $\Lambda r$-minimizer in $\frac{\Omega-x_0}{r}$. After this rescaling we are left to prove that for a given $\tau \in (0,1)$ there exists $\eps_1 = \eps_1(\tau)>0$ such that if $P(E, B_1) < \eps_1$ then
\[
\F(E,u; B_{\tau }) \leq C_1 \tau^n \left( \F(E,u; B_{1}) +   \Lambda r \right).
\]
Without loss of generality we may assume that $\tau < 1/2$.

If the perimeter of $E$ in $B_1$ is small then, by the relative isoperimetric inequality, either $|B_1 \cap E|$ or $|B_1 \setminus E|$ is small. Assume the latter is true. Then we have 
\[
|B_1 \setminus E| \leq c(n) P(E, B_1)^{\frac{n}{n-1}}.
\]
By Fubini's theorem and choosing as a representative of $E$ the set of points of density one, we have
\[
|B_1 \setminus E| \geq \int_\tau^{2\tau} \Ha^{n-1}(\partial B_\rho \setminus E)\, d \rho.
\]
Therefore we may choose $\rho \in (\tau, 2 \tau)$ such that
\begin{equation}
\label{use_isoperimetric}
 \Ha^{n-1}(\partial B_\rho \setminus E) \leq \frac{c(n)}{\tau} P(E, B_1)^{\frac{n}{n-1}} \leq \frac{c(n) \eps_1^{\frac{1}{n-1}}}{\tau} P(E, B_1).
\end{equation}

Set $F = E \cup B_\rho$ and observe that by the choice of the representative of $E$ it holds
\[
P(F;B_1) \leq P(E,B_1\setminus \bar{B}_\rho) + \Ha^{n-1}(\partial B_\rho \setminus E).
\]
Choosing $(F,u)$ as a competing pair and using the $\Lambda r$-minimality of $(E,u)$ we get
\[
\begin{split}
\gamma P(E,B_1) &+ \int_{B_1} \sigma_E |Du|^2 \, dx \leq \gamma P(F;B_1) + \int_{B_1} \sigma_F |Du|^2 \, dx + \Lambda r |F \setminus E| \\ 
&\leq \gamma ( P(E,B_1 \setminus \bar{B}_\rho) + \Ha^{n-1}(\partial B_\rho \setminus E) ) + \int_{B_1} \sigma_F |Du|^2 \, dx + \Lambda r  |B_\rho|.
\end{split}
\] 
Hence, choosing $\eps_1$ such that  $c(n) \eps_1^{\frac{1}{n-1}} \leq \tau^{n+1}$ and recalling that  $\rho \in (\tau, 2 \tau)$ we get from  \eqref{use_isoperimetric} that
\[
\gamma P(E,B_\tau) + \int_{B_\tau} \sigma_E |Du|^2 \, dx \leq \gamma \tau^n P(E,B_1) +  \beta \int_{B_{2 \tau}} |Du|^2 \, dx+ \Lambda r |B_{2 \tau}|.
\]
If we choose $\eps_1$ such that $c(n)\eps_1^{\frac{n}{n-1}} \leq \eps_0(2 \tau) |B_1|$, where $\eps_0$ is from Proposition \ref{decay_est}, we get
\[
\int_{B_{2 \tau}} |Du|^2 \, dx \leq 2^n C_0 \tau^n \int_{B_1} |Du|^2 \, dx
\]
and the result follows.

\end{proof}

The next result is contained in \cite[Theorem 2]{Lin}, where it is proven for local minimizers of $\F$. However, in the case of $\Lambda$-minimizers the same proof applies without changes once one observes that by comparing $(E,u)$ with $(F,u)$, where $F = E \setminus B_r(x_0)$ and $B_r(x_0) \subset \! \subset  \Omega$, one gets
\[
\gamma P(E,B_r(x_0)) + (\beta-\alpha) \int_{E \cap B_r(x_0) } |Du|^2\, dx  \leq \gamma \Ha^{n-1}(\partial B_r \cap E)+ \Lambda |B_r|  \leq C r^{n-1}. 
\]
\begin{theorem}
\label{energy_upper}
Let $(E,u)$ be a $\Lambda$-minimizer of $\F$ in $\Omega$. For every open set $U \subset \! \subset  \Omega$  there exists a constant $C_2$, depending on $U$ and $||Du||_{L^2(\Omega)}$, such that for every  $B_r(x_0) \subset U$ it holds
\[
\F(E,u; B_{ r}(x_0)) \leq C_2 r^{n-1}.
\] 
\end{theorem}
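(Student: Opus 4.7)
The plan is to reduce the theorem to \cite[Theorem 2]{Lin}, as the excerpt suggests, by establishing for the $\Lambda$-minimizer $(E,u)$ and every $B_r(x_0)\subset\subset\Omega$ the single inequality
\[
\gamma P(E, B_r(x_0)) + (\beta-\alpha) \int_{E\cap B_r(x_0)} |Du|^2\,dx \le C r^{n-1}.
\]
Once this is in hand, Lin's proof --- which assembles the full bound on $\F$ from this inequality together with the elliptic equation $\diver(\sigma_E Du)=0$ and the global $L^2$-bound on $Du$ --- applies without modification, so the only real content is this key estimate.

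To prove it I would fix $\rho\in(0,r)$ with $\Ha^{n-1}(\partial B_\rho(x_0)\cap\partial^* E)=0$ (true for a.e.\ $\rho$) and compare $(E,u)$ with the competitor $(F,u)$ where $F = E\setminus \bar{B}_\rho(x_0)$. Since $\rho<r$, the admissibility condition $F\Delta E = E\cap\bar{B}_\rho(x_0)\subset\subset B_r(x_0)$ holds, and because $F\cap B_\rho(x_0)=\emptyset$ one has
\[
P(F,B_r(x_0)) = P(E,B_r(x_0)\setminus\bar{B}_\rho(x_0)) + \Ha^{n-1}(\partial B_\rho(x_0)\cap E^{(1)})
\]
together with $\int_{B_r(x_0)} \sigma_F|Du|^2\,dx = \alpha\int_{B_\rho(x_0)}|Du|^2\,dx + \int_{B_r(x_0)\setminus B_\rho(x_0)} \sigma_E|Du|^2\,dx$. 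Substituting these identities into the $\Lambda$-minimality inequality $\F(E,u;B_r(x_0)) \le \F(F,u;B_r(x_0)) + \Lambda|F\Delta E|$ and cancelling the common integrals over $B_r(x_0)\setminus\bar{B}_\rho(x_0)$ leaves
\[
\gamma P(E,B_\rho(x_0)) + (\beta-\alpha)\int_{E\cap B_\rho(x_0)}|Du|^2\,dx \le \gamma\Ha^{n-1}(\partial B_\rho(x_0)) + \Lambda\omega_n\rho^n \le C\rho^{n-1}.
\]

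The final step is to let $\rho\uparrow r$ along admissible radii and invoke continuity from below of the Radon measure $|D\chi_E|$ and of the integral $\int_{E\cap B_\rho}|Du|^2\,dx$ on the nested family $B_\rho\uparrow B_r$ to obtain the claimed inequality at radius $r$. The only subtlety in the whole argument is this monotone passage to the limit, but it is routine for Radon measures and non-negative $L^1$ integrands, so I do not foresee any real obstacle.
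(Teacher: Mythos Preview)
Your proposal is correct and follows essentially the same approach as the paper: the paper also reduces to \cite[Theorem 2]{Lin} after deriving the same key inequality by comparing $(E,u)$ with $(E\setminus B_r(x_0),u)$. Your use of an intermediate radius $\rho<r$ followed by $\rho\uparrow r$ is simply a more careful handling of the compact-containment requirement $F\Delta E\subset\!\subset B_r(x_0)$, which the paper treats informally by taking $F=E\setminus B_r(x_0)$ directly (implicitly testing the $\Lambda$-minimality in a slightly larger ball, allowed since $B_r(x_0)\subset\!\subset\Omega$).
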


By combining the energy decay Lemma \ref{decay_energy} with the energy upper bound given by Theorem \ref{energy_upper} we obtain the following density lower bound by a standard iteration argument.  From now on the topological boundary $\partial E$ must be understood by considering the correct representative of the set (see \cite[Proposition~12.19]{M}). In particular, for such  representative of $E$ it holds $\overline{\partial^* E} = \partial E$.

\begin{proposition}
\label{density_estimate}
Let $(E,u)$ be a $\Lambda$-minimizer of $\F$  and   $U \subset \! \subset  \Omega$.  There exists a constant $c>0$, depending on $U$ and $||Du||_{L^2(\Omega)}$,  such that for every  $B_r(x_0) \subset U$  with $x_0 \in \partial E$ we have
\[
P(E,B_r(x_0)) \geq cr^{n-1}.
\] 
Moreover $\Ha^{n-1}( (\partial E \setminus  \partial^* E) \cap \Omega) = 0$.
\end{proposition}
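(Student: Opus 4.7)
The plan is to argue by contradiction via a standard iteration of Lemma~\ref{decay_energy}. Suppose $x_0 \in \partial E$ and $B_r(x_0) \subset U$, and suppose by way of contradiction that $P(E, B_r(x_0)) < c\, r^{n-1}$ for a sufficiently small constant $c>0$ to be fixed below. Fix a parameter $\tau \in (0,1)$ with $C_1 \tau \leq 1/2$ and
\[
\frac{C_1 \tau}{\gamma}\,(C_2 + r) \;<\; \eps_1(\tau),
\]
where $C_1, \eps_1(\tau)$ come from Lemma~\ref{decay_energy} and $C_2$ from Theorem~\ref{energy_upper}. This choice is consistent, since inspection of the proof of Lemma~\ref{decay_energy} shows that $\eps_1(\tau)$ scales polynomially in $\tau$ while the left-hand side is linear, so for $\tau$ moderate the inequality can be met. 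Then set $c = \eps_1(\tau)$.

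Writing $r_k = \tau^k r$ and $b_k = \F(E,u;B_{r_k}(x_0))/r_k^{n-1}$, Lemma~\ref{decay_energy} applied at scale $r_k$ gives
\[
b_{k+1} \;\leq\; C_1 \tau\,(b_k + r_k) \;\leq\; \tfrac12\,b_k + \tfrac12\,r_k .
\]
Using $b_0 \leq C_2$ from Theorem~\ref{energy_upper} together with the choice of $\tau$, an inductive check shows that the smallness hypothesis $P(E,B_{r_{k+1}}(x_0))/r_{k+1}^{n-1} \leq b_{k+1}/\gamma < \eps_1(\tau)$ persists, so the iteration continues indefinitely; the recursion and $r_k \to 0$ force $b_k \to 0$ and hence $P(E,B_{r_k}(x_0))/r_k^{n-1} \to 0$. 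By the relative isoperimetric inequality one of the ratios $|E \cap B_{r_k}(x_0)|/|B_{r_k}|$, $|B_{r_k}(x_0)\setminus E|/|B_{r_k}|$ tends to $0$ along a subsequence. A further standard comparison, plugging in $F = E \setminus B_{r_k}(x_0)$ or $F = E \cup B_{r_k}(x_0)$ in the $\Lambda$-minimality inequality, propagates this to $|E \cap B_\rho(x_0)| = 0$ or $|B_\rho(x_0)\setminus E| = 0$ for some small $\rho>0$; this contradicts the fact that, for the precise representative chosen (so that $\partial E = \overline{\partial^* E}$), both $|E \cap B_\rho(x_0)|$ and $|B_\rho(x_0)\setminus E|$ are strictly positive for every $\rho > 0$ whenever $x_0 \in \partial E$.

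For the second assertion, the density lower bound just proved gives $\theta^{n-1}_*(\mu_E, x) \geq c/\omega_{n-1}$ for every $x \in \partial E \cap \Omega$, where $\mu_E = |D\chi_E|$ denotes the perimeter measure. The standard comparison between a Radon measure with positive lower density and the Hausdorff measure yields $\Ha^{n-1}(A) \leq C\,\mu_E(A)$ for every Borel $A \subset \partial E \cap \Omega$; applied to $A = (\partial E \setminus \partial^* E) \cap \Omega$ together with the fact that $\mu_E$ is concentrated on $\partial^* E$ (De Giorgi's structure theorem), this gives $\Ha^{n-1}(A) \leq C\,\mu_E(A) = 0$. The main subtlety throughout is the circular dependence between $\tau$ and $\eps_1(\tau)$ in the iteration: $\tau$ cannot be taken arbitrarily small (since $\eps_1(\tau)$ itself shrinks as $\tau \to 0$), and one has to fix a moderate choice of $\tau$ compatible with the entire chain of inequalities before selecting the initial smallness $c$.
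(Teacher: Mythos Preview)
Your overall strategy—iterate Lemma~\ref{decay_energy} until the normalized energy decays to zero and derive a contradiction—is the same as the paper's. However, your resolution of the ``circular dependence between $\tau$ and $\eps_1(\tau)$'' has a genuine gap. You require a single $\tau\in(0,1)$ with $C_1\tau\leq 1/2$ and
\[
\frac{C_1\tau}{\gamma}(C_2+r)<\eps_1(\tau),
\]
and justify this by saying ``$\eps_1(\tau)$ scales polynomially in $\tau$ while the left-hand side is linear.'' This reasoning is backwards. From the proof of Lemma~\ref{decay_energy} one has $\eps_1(\tau)\leq c(n)\,\tau^{(n+1)(n-1)}$ (and in fact a much smaller bound from the constraint $c(n)\eps_1^{n/(n-1)}\leq\eps_0(2\tau)|B_1|$, since $\eps_0$ is itself a high power of $\tau$). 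Hence $\eps_1(\tau)/\tau\to0$ as $\tau\to0$, while for $\tau$ bounded away from $0$ the ratio $\eps_1(\tau)/\tau$ is bounded by a constant depending only on $n,\alpha,\beta$. Since $C_1C_2/\gamma$ depends on $U$, $\|Du\|_{L^2(\Omega)}$ and $\gamma$ and can be arbitrarily large, in general no such $\tau$ exists.

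The paper breaks this circularity with a \emph{two-scale} trick: first fix $\tau$ so that $2C_1\tau^{1/2}<1$ (this freezes $\eps_1(\tau)$), then choose a second parameter $\sigma$ with $2C_1C_2\sigma<\gamma\,\eps_1(\tau)$, and take the contradiction hypothesis to be $P(E,B_r)<\eps_1(\sigma)\,r^{n-1}$. One application of Lemma~\ref{decay_energy} with parameter $\sigma$ absorbs the large constant $C_2$, bringing the energy at scale $\sigma r$ below $\gamma\,\eps_1(\tau)(\sigma r)^{n-1}$; thereafter the iteration with parameter $\tau$ is self-sustaining. The point is that $\sigma$ can be made as small as needed without any constraint of the form $\sigma\lesssim\eps_1(\sigma)$, because $\eps_1(\sigma)$ only appears as the initial threshold $c$, not inside the iteration.

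A secondary issue: your final contradiction (forcing $|E\cap B_\rho(x_0)|=0$ or $|B_\rho(x_0)\setminus E|=0$ via a ``further standard comparison'') is not fully justified here—the argument you allude to is standard for $\Lambda$-minimizers of the perimeter, but for $\Lambda$-minimizers of $\F$ one has to handle the Dirichlet term, and you have not done so. The paper sidesteps this by first taking $x_0\in\partial^*E$, where $r^{1-n}P(E,B_r(x_0))\to\omega_{n-1}$ gives an immediate contradiction with $b_k\to0$, and then extending to all of $\partial E$ via $\overline{\partial^*E}=\partial E$. Your treatment of the second assertion, $\Ha^{n-1}((\partial E\setminus\partial^*E)\cap\Omega)=0$, is correct and essentially the same as the paper's.
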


\begin{proof}
The proof is as in \cite[Theorem 7.21]{AFP}  and in fact,  even simpler. Fix $x_0 \in \partial^* E$. Without loss of generality we may assume that $x_0 = 0$.  Fix $\tau \in (0,1)$ such that $2C_1 \tau^{\frac{1}{2}} < 1$ and $\sigma \in (0,1)$ such that $2C_1C_2 \sigma < \eps_1(\tau)\gamma$ and $r_0$ such that $r_0 < \min \{ \eps_1(\tau)\gamma, C_2\}$, where $C_1, \eps_1$ and $C_2$ are the constants from Lemma \ref{decay_energy} and Theorem \ref{energy_upper}. Assume by contradiction that for some  $B_r \subset U$ with $r < r_0$  we have $P(E,B_r) < \eps_1(\sigma)r^{n-1}$. Then  by induction we deduce that 
\begin{equation} \label{induction}
\F(E,u;B_{\sigma \tau^h r}) \leq \eps_1(\tau) \gamma \tau^{\frac{h}{2}}(\sigma \tau^h r)^{n-1}
\end{equation}
for all $h = 0,1,2 , \dots$. Indeed, when $h = 0$, using  Lemma \ref{decay_energy} and Theorem \ref{energy_upper} and recalling our choice of $\sigma$ and $r_0$ we obtain
\[
\F(E,u;B_{\sigma  r}) \leq C_1\sigma^n(\F(E,u;B_{ r}) + r^n) \leq 2C_1C_2 \sigma( \sigma r)^{n-1} < \eps_1(\tau) \gamma ( \sigma r)^{n-1}.
\]
If \eqref{induction} holds for $h$ to deduce that it also holds for $h+1$ it is enough to apply Lemma \ref{decay_energy} and to recall that $2C_1 \tau^{\frac{1}{2}} < 1$ and that $r <r_0 <\eps_1(\tau) \gamma$. In particular, this implies   
\[
\lim_{r \to 0} \frac{P(E,B_r) }{r^{n-1}} = 0,
\]
which is a contradiction since $x_0 \in \partial^* E$. This proves that for every $x_0 \in \partial^* E$ and $r <r_0$ it holds $P(E,B_r(x_0)) \geq \eps_1(\sigma)r^{n-1}$. The claim follows from the fact that $\overline{\partial^* E} = \partial E$. The fact that $\Ha^{n-1}( (\partial E \setminus  \partial^* E) \cap \Omega) = 0$ follows from the density lower bound and  \cite[(2.42)]{AFP}.
\end{proof}

Let $(E,u)$  be a $\Lambda$-minimizer of $\F$.  We introduce \emph{the excess of $E$} at the point $x \in \partial E$ at the scale $r>0$  in direction $\nu \in  S^{n-1}$
\[
\mathcal{E}(x,r, \nu) := \frac{1}{2r^{n-1}} \int_{\partial E \cap B_r(x)} |\nu_E(y) - \nu|^2 \, d \Ha^{n-1}(y)
\]
and 
\[
\E(x,r) := \min_{\nu \in S^{n-1}} \E(x,r, \nu) 
\] 
and \emph{the rescaled Dirichlet integral of $u$}
\[
\D(x,r) := \frac{1}{r^{n-1}} \int_{B_r(x)} |Du|^2\, dy.
\]

The density lower bound implies the following important properties of $\Lambda$-minimizers of $\F$. The first one is the so called  height bound lemma. 
\begin{lemma}[Height bound]
\label{height_bound}
Let $(E,u)$ be a $\Lambda$-minimizer of $\F$  in $B_r(x_0)$. There exist $C$ and $\eps>0$, depending  on $||Du||_{L^2(B_r(x_0))}$,  such that if $x_0 \in \partial E$ and 
\[
\E(x_0,r,\nu) < \eps
\]
for some $\nu  \in S^{n-1}$ then 
\[
\sup_{y \in \partial E \cap B_{r/2}(x_0)} \frac{|\nu \cdot (y-x_0)|}{r} \leq C \E(x_0,r,\nu)^{\frac{1}{2(n-1)}}.
\]
\end{lemma}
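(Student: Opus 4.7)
The strategy is to transplant the classical De Giorgi height bound for $\Lambda$-minimizers of the perimeter (cf.\ Maggi, Theorem 22.8) into our setting, using Proposition \ref{density_estimate} as the key consequence of $\Lambda$-minimality and Theorem \ref{energy_upper} to control the Dirichlet perturbation.

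\textbf{Set-up.} By translation and rotation assume $x_0=0$ and $\nu=e_n$. The goal is to bound
\[
t := \sup\bigl\{y\cdot e_n : y\in\partial E\cap\overline{B_{r/2}}\bigr\}
\]
(WLOG $t\ge 0$, flipping $\nu$ otherwise) by $C r\,\E(0,r,e_n)^{1/(2(n-1))}$. Pick $y_0\in\partial E\cap\overline{B_{r/2}}$ realizing the supremum; by choosing $\eps$ small we may assume $t\le r/2$, so $B_{t/2}(y_0)\subset B_r(0)$.

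\textbf{Perimeter concentration and two sheets.} Proposition \ref{density_estimate} applied at $y_0$ at scale $t/2$ gives $P(E,B_{t/2}(y_0))\ge c\,t^{n-1}$, so there is an ``upper sheet'' of $\partial^*E$ concentrated around $y_0$ at altitude $\sim t$. The same density lower bound at the origin provides a ``lower sheet'' at altitude $\sim 0$. Projecting onto $\{y_n=0\}$ via $\pi(y)=y-(y\cdot e_n)e_n$ and applying the area formula to the multiplicity function $N(z):=\Ha^0(\partial^*E\cap B_r\cap \pi^{-1}(z))$, the two sheets force $N(z)\ge 2$ on a disk of area $\gtrsim t^{n-1}$ in $\{y_n=0\}$. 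Using the identities
\[
\int_{\mathbb{R}^{n-1}} N(z)\,dz = \int_{\partial^*E\cap B_r}|\nu_E\cdot e_n|\,d\Ha^{n-1}
\]
and
\[
\int_{\partial^*E\cap B_r}(1-\nu_E\cdot e_n)\,d\Ha^{n-1} = \E(0,r,e_n)\,r^{n-1},
\]
together with Cauchy-Schwarz to pass from $|\nu_E - e_n|^2$ to $|\nu_E - e_n|$, one controls the ``extra'' multiplicity above the baseline by $C\, r^{n-1}\,\E(0,r,e_n)^{1/2}$; combined with the geometric lower bound $c\,t^{n-1}$, this gives $t\le C r\,\E(0,r,e_n)^{1/(2(n-1))}$.

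\textbf{Main obstacle.} The technically delicate point is making the ``two sheets'' lower bound rigorous and ensuring that the Dirichlet term does not spoil the comparison. To produce competitors at scale $t$, one compares $(E,u)$ with pairs $(F, v_F)$, where $F$ is obtained by cutting $E$ with a horizontal slab near $y_0$ and $v_F\in u+W^{1,2}_0(B_{t/2}(y_0))$ minimizes $\int\sigma_F|Dv|^2$; the elementary inequality $\int\sigma_F|Dv_F|^2\le\int\sigma_F|Du|^2$ together with Theorem \ref{energy_upper} shows that the Dirichlet contribution to the $\Lambda$-minimality inequality is at most $C\,(t/2)^{n-1}$, with $C$ depending only on $\|Du\|_{L^2(B_r(x_0))}$. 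Verifying that this extra deficit is compatible with De Giorgi's slicing argument (i.e., that it can be absorbed into the constants appearing in the two-sheet estimate) is the only point requiring care; everything else follows the classical template and yields the stated height estimate with the claimed dependence of the constants on $\|Du\|_{L^2(B_r(x_0))}$.
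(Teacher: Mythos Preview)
Your approach coincides with the paper's: both defer to \cite[Theorem~22.8]{M}, noting that the argument there carries over once the perimeter density lower bound of Proposition~\ref{density_estimate} is available (this is where the dependence of the constants on $\|Du\|_{L^2}$ enters). The place where you overcomplicate is the ``Main obstacle'' paragraph: Maggi's height-bound proof uses minimality \emph{only} through the upper and lower perimeter density estimates, with no further slab-type competitor constructions appearing in the argument itself, so there is no need to build pairs $(F,v_F)$ and absorb a Dirichlet deficit via Theorem~\ref{energy_upper}. Once Proposition~\ref{density_estimate} and Theorem~\ref{energy_upper} are in hand, the remainder is pure geometric measure theory and transfers verbatim---which is precisely the paper's one-line proof.
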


\begin{proof}
The proof of this result can be obtained arguing exactly  as in the case of  $\Lambda$-minimizers of the perimeter  \cite[Theorem~22.8]{M}. Indeed, it  is based only on the relative isoperimetric inequality, the density lower bound, i.e., Proposition  \ref{density_estimate} and the compactness result below. 
\end{proof}

The next result is proved as in the case of the $\Lambda$-minimizers of the perimeter with the obvious changes due to the presence of the Dirichlet integral. 
\begin{lemma}[Compactness]
\label{compactness}
Let $(E_h,u_h)$ be a sequence of  $\Lambda_h$-minimizers of $\F$ in $\Omega$ such that $\Lambda_h \to \Lambda \in [0,\infty)$ and $\sup_h \F(E_h,u_h; \Omega)< \infty$. Then there exist a subsequence, not relabelled, and a $\Lambda$-minimizer $(E,u)$ of $\F$ such that  for every open set  $U \subset \! \subset \Omega$, $E_h \to E$ in $L^1(U)$, $P(E_h,U) \to P(E,U)$, $u_h \to u$  in $W^{1,2}(U)$ and moreover
\begin{enumerate}
\item[(i)] if $x_h \in \partial E_h \cap U$ and  $x_h \to x \in U$, then $x \in \partial E \cap U$.
\item[(ii)]  if $x \in \partial E \cap U$ there exists  $x_h \in \partial E_h \cap U$ such that  $x_h \to x$.
\end{enumerate}
If in addition, $Du_{h} \rightharpoonup 0$ weakly in $L_{loc}^2(\Omega, \R^n)$ and $\Lambda_h \to 0$, then $E$ is a local minimizer of the perimeter, i.e., for every $F$ such that $F \Delta E \subset \! \subset B_r(x_0) \subset \Omega$ it holds
\[
P(E, B_r(x_0) ) \leq P(F, B_r(x_0) ).
\] 
\end{lemma}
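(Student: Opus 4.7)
The plan is to extract convergent subsequences, identify $(E,u)$ as a $\Lambda$-minimizer, upgrade the weak $W^{1,2}_{loc}$ convergence of $u_h$ to a strong one, and finally derive the perimeter convergence, the boundary properties (i)--(ii), and the addendum. From the energy bound and $\alpha\le\sigma_{E_h}\le\beta$ one has $\sup_h P(E_h,\Omega)<\infty$ and $\sup_h\|Du_h\|_{L^2(\Omega)}<\infty$; BV compactness gives a subsequence with $E_h\to E$ in $L^1_{loc}(\Omega)$, and hence $\sigma_{E_h}\to\sigma_E$ a.e. For $u_h$ I would subtract the mean on an exhausting family of relatively compact sets and apply Poincar\'e, Rellich, and a diagonal argument to obtain $u_h\rightharpoonup u$ weakly in $W^{1,2}_{loc}(\Omega)$ and strongly in $L^2_{loc}$. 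The $\Lambda_h$-minimality yields the Euler--Lagrange equation $\int_\Omega\langle\sigma_{E_h}Du_h,D\varphi\rangle\,dx=0$ for every $\varphi\in W^{1,2}_0(B_r(x_0))$; since $\sigma_{E_h}\to\sigma_E$ boundedly and a.e.\ while $Du_h\rightharpoonup Du$, one gets $\sigma_{E_h}Du_h\rightharpoonup\sigma_E Du$ in $L^2_{loc}$, so the limit $u$ satisfies the same equation with $E$ in place of $E_h$ and is thus the elastic equilibrium of $E$. To upgrade to strong convergence of $Du_h$ I would test the equation for $u_h$ with $\varphi=\psi^2(u_h-u)$ for a cut-off $\psi$; using the strong $L^2_{loc}$ convergence $u_h-u\to 0$ and the weak convergence of $\sigma_{E_h}Du_h$, a direct computation gives $\int\psi^2\sigma_{E_h}|Du_h-Du|^2\,dx\to 0$, hence $Du_h\to Du$ strongly in $L^2_{loc}(\Omega)$.

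For the $\Lambda$-minimality of $(E,u)$, I would take any admissible competitor $(F,v)$ with $F\Delta E\subset\!\subset B_{\sigma_0}(x_0)\subset\!\subset B_r(x_0)\subset\Omega$ and $v-u\in W^{1,2}_0(B_r)$, and construct the cut-and-paste competitor $F_h=(F\cap B_\sigma)\cup(E_h\setminus B_\sigma)$ together with $v_h=v+(u_h-u)$ on $B_r$, $v_h=u_h$ elsewhere. The radius $\sigma\in(\sigma_0,r)$ is chosen via the Fubini identity $\int_{\sigma_0}^r\Ha^{n-1}(\partial B_\sigma\cap(E\Delta E_h))\,d\sigma=|(E\Delta E_h)\cap(B_r\setminus B_{\sigma_0})|\to 0$, so that, along a further subsequence, $\Ha^{n-1}(\partial B_\sigma\cap(E\Delta E_h))\to 0$ for the selected $\sigma$. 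Since $F_h\Delta E_h=(F\Delta E_h)\cap B_\sigma\subset\!\subset B_r$ and $v_h-u_h=v-u\in W^{1,2}_0(B_r)$, the pair $(F_h,v_h)$ is admissible. Writing $\F(E_h,u_h;B_r)\le\F(F_h,v_h;B_r)+\Lambda_h|F_h\Delta E_h|$ and passing to the limit --- lsc of perimeter on the left, and on the right the strong $L^2_{loc}$ convergence $Dv_h\to Dv$, the a.e.\ convergence $\sigma_{F_h}\to\sigma_F$ combined with dominated convergence, the vanishing of the sphere term, and $|F_h\Delta E_h|\to|F\Delta E|$ --- yields $\F(E,u;B_r)\le\F(F,v;B_r)+\Lambda|F\Delta E|$. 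Specializing this to $F=E$, $v=u$ (so $v_h=u_h$) gives $\limsup_h P(E_h,B_\sigma)\le P(E,B_\sigma)$ for the chosen $\sigma$; combined with lsc and the fact that good radii are dense in $(\sigma_0,r)$, a standard exhaustion delivers $P(E_h,U)\to P(E,U)$ on every $U\subset\!\subset\Omega$ with $|D\chi_E|(\partial U)=0$.

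Properties (i) and (ii) are immediate from the density lower bound of Proposition~\ref{density_estimate}. For (i), if $x_h\in\partial E_h$ converges to $x\in U$, the lower bound $P(E_h,B_r(x))\ge c(r/2)^{n-1}$ for small $r$ and large $h$ together with perimeter lsc gives $P(E,B_r(x))>0$ for all such $r$, so $x\in\overline{\partial^*E}=\partial E$. Conversely, if $x\in\partial E$ and no sequence $x_h\in\partial E_h$ accumulates at $x$, then for some $r>0$ and infinitely many $h$ we would have $B_r(x)\cap\partial E_h=\varnothing$, so by connectedness $E_h\cap B_r(x)\in\{\varnothing,B_r(x)\}$, contradicting $L^1$-convergence and the density bounds at $x$, which force $|E\cap B_r(x)|$ and $|B_r(x)\setminus E|$ both positive. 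For the addendum, $Du_h\rightharpoonup 0$ together with the strong $L^2_{loc}$ convergence above forces $Du\equiv 0$ and $\int_{B_r}|Du_h|^2\,dx\to 0$; testing $\Lambda_h$-minimality of $(E_h,u_h)$ against any cut-and-paste competitor $(F_h,u_h)$ built from a given $F$ with $F\Delta E\subset\!\subset B_r$ --- and noting that the Dirichlet difference collapses to $\int_{B_\sigma}(\sigma_E-\sigma_{E_h})|Du_h|^2\,dx\to 0$ --- one arrives, using $\Lambda_h\to 0$, at $P(E,B_r)\le P(F,B_r)$. The main technical obstacle is the orchestration of the cut-and-paste construction: the strong $W^{1,2}_{loc}$ convergence of $u_h$ must be proved \emph{before} constructing the perturbed function $v_h=v+(u_h-u)$, so that the perturbed Dirichlet energies actually pass to the limit; simultaneously, the radius $\sigma$ has to be extracted via Fubini (and a further subsequence) in order to kill the perimeter trace on $\partial B_\sigma$, which is what allows the limit inequality to close cleanly.
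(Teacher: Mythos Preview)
Your proof is correct, but it takes a genuinely different route from the paper's, and the comparison is instructive. You first establish the strong $W^{1,2}_{loc}$ convergence of $u_h$ by a direct PDE argument---testing the equation $\diver(\sigma_{E_h}Du_h)=0$ with $\psi^2(u_h-u)$---and then build the competitor $v_h=v+(u_h-u)$, so that the Dirichlet terms pass to the limit by strong convergence. The paper instead never proves strong convergence in advance: it uses the \emph{interpolated} competitor $v_h=\zeta v+(1-\zeta)u_h$ and the convexity of $|\cdot|^2$ to absorb the term $\int\sigma_{E_h}(1-\zeta)|Du_h|^2$ on both sides, so that only weak convergence and lower semicontinuity are needed to obtain $\Lambda$-minimality; the strong $W^{1,2}$ convergence then falls out as a byproduct by specializing to $(F,v)=(E,u)$. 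Your approach is more modular (and closer in spirit to compensated-compactness or Minty-type arguments), while the paper's is slightly more economical since a single comparison delivers both minimality and convergence. One small slip: in your argument for (i) you invoke ``perimeter lsc'' to deduce $P(E,B_r(x))>0$ from $P(E_h,B_r(x))\geq c(r/2)^{n-1}$, but lower semicontinuity goes the wrong way; you should instead use the perimeter \emph{convergence} $P(E_h,B_r(x))\to P(E,B_r(x))$ that you have already established for a.e.\ radius $r$.
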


\begin{proof}
We start by proving the $\Lambda$-minimality of $(E,u)$. Let us fix $B_r(x_0) \subset\! \subset \Omega$ and assume that $x_0 =0$. Without loss of generality we may  assume that  $E_h \to E$ in $L^1(B_r)$, $u_h \rightharpoonup u$ weakly in $W^{1,2}(B_r)$ and strongly in $L^2(B_r)$.  Let $(F,v)$ be a pair such that $F \Delta E \subset \! \subset B_r$ and $\text{supp}(u-v) \subset \! \subset B_r$.  By Fubini's theorem and passing to  a subsequence if necessary we may choose $\rho < r$ such that $F \Delta E \subset \! \subset  B_{\rho}$,  $\text{supp}(u-v) \subset \! \subset B_{\rho}$,
\[
\Ha^{n-1}(\partial^* F \cap  \partial B_{\rho} ) = \Ha^{n-1}(\partial^* E_h \cap  \partial B_{\rho} ) = 0 \quad \text{and} \quad \lim_{h \to 0} \Ha^{n-1}(\partial B_{\rho} \cap E \Delta E_h) = 0
\]
where it is understood that $E$ and $E_h$ stand for the set of points of density one of $E$ and $E_h$, respectively. Fix a cut-off function $\zeta \in C_0^1(B_r)$ such that $\zeta \equiv 1$ in $B_{\rho}$, and choose $F_h = (F\cap B_{\rho}) \cup (E_h\setminus B_{\rho})$ and $v_h = \zeta v + (1- \zeta) u_h$. Then by the $\Lambda_h$-minimality of $(E_h,u_h)$, the choice of $\rho$, the strong convergence of $u_h\to u$  in $L^2$ and by convexity, we have
\begin{equation} \label{compare}
\begin{split}
&\gamma P(E_h,B_r) + \int_{B_r} \sigma_{E_h} |Du_h|^2 \, dx \leq \gamma P(F_h,B_r) + \int_{B_r} \sigma_{F_h} |Dv_h|^2 \, dx + \Lambda_h|F_h \Delta E_h| \\
&\leq \gamma [P(F, B_{\rho}) + P(E_h, B_r \setminus \bar{B}_{\rho})]+ \int_{B_r} \sigma_{F_h} \zeta  |Dv|^2 \, dx + \int_{B_r} \sigma_{E_h}(1-\zeta)|Du_h|^2 \, dx  + \eps_h+  \Lambda_h|F_h \Delta E_h|
\end{split}
\end{equation}
for some $\eps_h \to 0$. Thus by a simple lower semicontinuity argument we have
\[
\gamma P(E,B_\rho) + \int_{B_r} \sigma_{E} \zeta|Du|^2 \, dx\leq \gamma P(F, B_{\rho}) +  \int_{B_r} \sigma_{F} \zeta |Dv|^2 \, dx + \Lambda |F \Delta E|.
\]
 Letting $\zeta \downarrow \chi_{B_\rho}$  in the previous inequality we conclude that 
\begin{equation} \label{easy_lowersemi}
\gamma P(E,B_\rho) + \int_{B_\rho} \sigma_{E} |Du|^2 \, dx \leq  \gamma P(F, B_{\rho}) +  \int_{B_\rho} \sigma_{F} |Dv|^2 \, dx  + \Lambda |F \Delta E|
\end{equation}
thus proving the $\Lambda$-minimality of $(E,u)$. Similarly, choosing $F = E$  and $v=u$ in \eqref{compare} and arguing as before we get
\[
\limsup_{h \to \infty}\left( \gamma P(E_h,B_\rho) + \int_{B_r} \sigma_{E_h}\zeta |Du_h|^2 \, dx\right)  \leq  \gamma P(E, B_{\rho}) +  \int_{B_r} \sigma_{F} \zeta |Du|^2 \, dx. 
\] 
 Letting $\zeta \downarrow \chi_{B_\rho}$ we conclude that 
\[
\lim_{h \to \infty} P(E_h,B_\rho) =  P(E, B_{\rho})  \qquad \text{and} \qquad \lim_{h \to \infty} \int_{B_\rho} \sigma_{E_h} |Du_h|^2 \, dx=  \int_{B_\rho} \sigma_{E} |Du|^2 \, dx. 
\]
A standard argument then implies that $P(E_h,U) \to P(E,U)$, $u_h \to u$  in $W^{1,2}(U)$ for every open set $U \subset \!\subset \Omega$.

Finally we note that if $Du = 0$ we can choose $v =u$ in \eqref{easy_lowersemi} thus proving that $E$ is a $\Lambda$-minimizer of the perimeter.  The claim (i) and (ii) follow exactly as in \cite[Theorem 21.14]{M}.

\end{proof}

The next  consequence of the density lower bound is the  Lipschitz approximation. Its  proof is based only on the height bound estimate and can be obtained by following word by word the proof given in \cite[Theorem 23.7]{M}. To that aim we use the notation $x = (x',x_n) \in \R^{n-1}\times \R$  for a generic point in $\R^n$ and with a slight abuse of notation we still denote by $Df$ the gradient of a function $f: \R^{n-1} \to \R$. 

\begin{proposition}[Lipschitz approximation]
\label{lipschitz_approx}
Let $(E,u)$ be a $\Lambda$-minimizer of $\F$  in $B_{r}(x_0)$.  There exist $C_3$ and $\eps_3>0$, depending on $||Du||_{L^2(B_r(x_0))}$,  such that if $x_0 \in \partial E$  and 
\[
\E(x_0,r, e_n) < \eps_3
\]
then there exists a Lipschitz function $f : \R^{n-1} \to \R$ such that 
\[
\sup_{x' \in \R^{n-1}} \frac{|f(x')|}{r} \leq C_3 \E(x_0,r, e_n)^{\frac{1}{2(n-1)}}, \qquad ||Df ||_{L^\infty} \leq 1
\]
and
\[
\frac{1}{r^{n-1}} \Ha^{n-1}(\partial E \Delta \Gamma_f \cap B_{r/2}(x_0)) \leq C_3 \E(x_0,r, e_n)
\]
where $\Gamma_f $ is the graph of $f$. Moreover
\[
\frac{1}{r^{n-1}}\int_{B_{r/2}^{n-1}(x_0')} |Df|^2 \, dx' \leq C_3 \E(x_0,r, e_n).
\]
\end{proposition}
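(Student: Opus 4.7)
The plan is to follow the classical Lipschitz approximation scheme for $\Lambda$-minimizers of the perimeter as in \cite[Theorem 23.7]{M}; the only essential ingredients are the height bound of Lemma~\ref{height_bound} and the density lower bound of Proposition~\ref{density_estimate}, both of which are available in the present setting. After translating $x_0$ to the origin and rescaling so that $r = 1$, I would split $\partial E \cap B_{1/2}$ into a \emph{good} part
\[
K := \big\{ x \in \partial E \cap B_{1/2} \, : \, \sup_{0 < s \leq 1/2} \E(x, s, e_n) < \eta_0 \big\}
\]
and a \emph{bad} remainder $\mathcal{B} := (\partial E \cap B_{1/2}) \setminus K$, where $\eta_0$ is a small threshold to be fixed later in terms of $n$ and the constants of Lemma~\ref{height_bound}. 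A Besicovitch covering argument, in which each $x \in \mathcal{B}$ contributes a ball $B_{s(x)}(x)$ on which the excess exceeds $\eta_0$, combined with the global perimeter bound $P(E, B_1) \leq C$ from Theorem~\ref{energy_upper}, yields the measure estimate $\Ha^{n-1}(\mathcal{B}) \leq C \eta_0^{-1}\, \E(0, 1, e_n)$.

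Next, on $K$ the height bound implies that the vertical projection $\pi(x) = x'$ is bi-Lipschitz onto its image. Indeed, for $x, y \in K$, applying Lemma~\ref{height_bound} at $x$ at scale $s = 4 |x - y|$ gives $|x_n - y_n| \leq C \eta_0^{1/(2(n-1))}\, |x - y|$, so a sufficiently small choice of $\eta_0$ forces $|x_n - y_n| \leq |x' - y'|$. The map $f_0(x') := x_n$, well-defined on $\pi(K)$, is therefore $1$-Lipschitz, and McShane's extension theorem produces a $1$-Lipschitz function $f : \R^{n-1} \to \R$. The $C^0$ bound on $f$ follows by applying Lemma~\ref{height_bound} at $x_0 = 0$. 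The estimate on $\Ha^{n-1}(\partial E \Delta \Gamma_f \cap B_{1/2})$ then splits into the contribution from $\mathcal{B}$, controlled above, and the portion of $\Gamma_f$ not lying on $\partial E$, which projects into $\pi(\mathcal{B})$ and is bounded using the Lipschitz constant of $f$.

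Finally, the $L^2$ bound on $Df$ comes from comparing $|Df|^2$ with the excess integrand. On $\pi(K \cap B_{1/2})$, since $\nu_E$ is close to $e_n$, the area formula yields
\[
\int_{\pi(K \cap B_{1/2})} |Df|^2 \, dx' \leq C \int_{\partial E \cap B_{1/2}} |\nu_E - e_n|^2 \, d\Ha^{n-1} \leq C\, \E(0, 1, e_n),
\]
while on the complement we use $|Df| \leq 1$ together with the measure bound on $\pi(\mathcal{B})$ from the first step. The main obstacle is purely quantitative rather than conceptual: one must calibrate $\eta_0$, and consequently $\eps_3$, so that the Lipschitz constant of $f$ becomes exactly $1$ while at the same time the smallness hypothesis of Lemma~\ref{height_bound} remains satisfied at every scale that appears in the covering argument, and the covering is carried out strictly inside $B_1$ so that boundary effects do not spoil the excess estimate.
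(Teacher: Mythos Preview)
Your proposal is correct and follows exactly the approach indicated by the paper: the paper does not give an independent proof but simply states that the argument ``is based only on the height bound estimate and can be obtained by following word by word the proof given in \cite[Theorem~23.7]{M}'', which is precisely the good-set/bad-set decomposition and McShane extension scheme you outline. Your sketch also correctly identifies the auxiliary inputs (the energy upper bound of Theorem~\ref{energy_upper} for the covering estimate and the density lower bound underlying Lemma~\ref{height_bound}), so there is nothing to add.
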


Finally  we state the following reverse Poincar\'e inequality which plays the role of the classical Caccioppoli inequality in the elliptic regularity theory. The proof can be obtained exactly as in the case of $\Lambda$-minimizers of the perimeter by constructing a suitable squashing-deformation $F$ of the set $E$ and comparing the energies at $(E,u)$ and $(F,u)$, see \cite[Theorem 24.1]{M}.

\begin{theorem}
\label{tilt_lemma}
Let $(E,u)$ be a $\Lambda$-minimizer of $\F$ in $B_r(x_0)$. There exist two constants $C_4$  and $\eps_4>0$ such that  if $x_0 \in \partial E$  and $\E(x_0,r,\nu) < \eps_4$ then 
\[
\E(x_0,r/2,\nu) \leq C_4\left( \frac{1}{r^{n+1}} \int_{\partial E\cap B_r(x_0)} |\nu \cdot (x-x_0) -  c|^2 \, d \Ha^{n-1} + \D(x_0,r) + r \right)
\]
for  every $c \in \R$. 
\end{theorem}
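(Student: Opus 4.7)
The plan is to adapt the classical reverse Poincaré argument for $\Lambda$-minimizers of the perimeter (\cite[Theorem~24.1]{M}) to the functional $\F$. The construction of the competitor is unchanged; the only new point is that the Dirichlet term in $\F$ forces an extra contribution on the right-hand side, and I will check that this contribution is absorbed by the rescaled Dirichlet integral $\D(x_0, r)$.

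After translation and rotation I assume $x_0 = 0$ and $\nu = e_n$, writing $y = (y', y_n) \in \R^{n-1} \times \R$. Choosing $\eps_4$ smaller than the thresholds in Lemma~\ref{height_bound} and Proposition~\ref{lipschitz_approx}, the hypothesis $\E(0, r, e_n) < \eps_4$ places $\partial E \cap B_{r/2}$ inside a thin horizontal slab about the hyperplane $\{y_n = c\}$ of vertical width $O(r \E(0,r,e_n)^{1/(2(n-1))})$. I then fix a radial cutoff $\zeta \in C_c^1(B_{3r/4})$ with $\zeta \equiv 1$ on $B_{r/2}$ and $|D\zeta| \leq C/r$, and for $t \in (0, 1)$ close to $1$ define the vertical squashing diffeomorphism
\[
\Phi(y', y_n) = \bigl(y', \ c + (1 - (1-t)\zeta(y))(y_n - c)\bigr),
\]
which equals the identity outside $B_{3r/4}$. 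The competitor is $F := \Phi(E)$, for which $F\Delta E \subset B_{3r/4}$ and $|F\Delta E| \leq C r^n$ by the height bound, so that $(F,u)$ is admissible against $(E,u)$ in $B_r$.

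Testing the $\Lambda$-minimality with $(F, u)$ gives
\[
\gamma\bigl(P(E, B_r) - P(F, B_r)\bigr) \leq \int_{B_r} (\sigma_F - \sigma_E)|Du|^2 \, dy + \Lambda|F\Delta E|.
\]
Since $\sigma_F$ and $\sigma_E$ differ only on $F \Delta E$ and are bounded in $[\alpha, \beta]$,
\[
\int_{B_r} (\sigma_F - \sigma_E)|Du|^2 \, dy \leq (\beta - \alpha)\int_{B_r}|Du|^2 \, dy = (\beta-\alpha)\, r^{n-1}\,\D(0, r),
\]
and $\Lambda|F\Delta E| \leq C\Lambda r^n$. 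On the perimeter side, the area formula applied to $\Phi$ restricted to $\partial^* E$, together with a Taylor expansion in $(1-t)$ optimized at $t$ close to $1$, is the geometric heart of \cite[Theorem~24.1]{M} and yields
\[
P(E, B_r) - P(F, B_r) \geq c_1\!\int_{\partial^* E \cap B_{r/2}} |\nu_E - e_n|^2\, d\Ha^{n-1} - \frac{C}{r^2}\int_{\partial^* E \cap B_r}|y_n - c|^2\, d\Ha^{n-1}.
\]
Combining the two displayed inequalities, dividing by $r^{n-1}$, and recalling the definition of $\E(0, r/2, e_n)$ yields the claimed bound with $C_4$ depending only on $n, \alpha, \beta, \gamma$.

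The main obstacle is the perimeter expansion in the fourth step, a purely geometric computation about $\Phi$ and $\partial^*E$ that is identical to the one for $\Lambda$-minimizers of the perimeter and does not interact with $u$ at all. The truly new ingredient—the Dirichlet term—is handled trivially: because $\sigma_F$ and $\sigma_E$ agree off the thin set $F\Delta E$, the extra term in the energy inequality is already controlled by $(\beta - \alpha)r^{n-1}\D(0, r)$ without any further work, which is exactly what produces the $\D(x_0, r)$ summand on the right-hand side.
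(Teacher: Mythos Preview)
Your proposal is correct and follows precisely the route the paper itself indicates: the paper does not give a detailed proof but states that one should construct the squashing-deformation $F$ of $E$ as in \cite[Theorem~24.1]{M} and compare the energies at $(E,u)$ and $(F,u)$, which is exactly your competitor and exactly your energy comparison. Your observation that the only new term, $\int_{B_r}(\sigma_F-\sigma_E)|Du|^2$, is trivially bounded by $(\beta-\alpha)r^{n-1}\D(x_0,r)$ is the single extra line needed beyond the perimeter case, and this is what produces the $\D(x_0,r)$ summand on the right-hand side.
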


While in our case the proofs for the  height bound, the compactness, the  reverse Poincar\'e and the Lipschitz approximation are exactly as in the case of $\Lambda$-minimizers of the perimeter, the next step in the regularity proof, i.e., the excess decay, is different. This is  due to the interplay between the excess and the rescaled Dirichlet integral. We follow an argument similar to the one used in proving the flatness decay for the minimizers of the  Mumford-Shah functional, see \cite[Theorem 8.15]{AFP}. To this aim we  first prove the following weaker form of the Euler-Lagrange equation.

\begin{proposition}
\label{euler2}
Let $(E,u)$ be a $\Lambda$-minimizer of $\F$ in $B_r(x_0)$. For every vector field $X \in C_0^1(B_r(x_0); \R^n)$ it holds
\begin{equation}\label{euler21}
\gamma \int_{\partial E} \diver_{\tau}X \, d \Ha^{n-1} + \int_\Omega \sigma_E (|Du|^2\diver X- 2 \langle DX  Du, Du \rangle ) \, dx \leq \Lambda \int_{\partial E} |X| \, d \Ha^{n-1},
\end{equation}
where $\diver_{\tau}$ denotes the tangential divergence on $\partial E$.
\end{proposition}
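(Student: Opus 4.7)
The plan is to test $\Lambda$-minimality against the family of competitors obtained by flowing both $E$ and $u$ along the vector field $X$, and then linearize in the flow parameter. Concretely, for $X\in C_0^1(B_r(x_0);\R^n)$ and $t$ with $|t|$ small enough, define the diffeomorphism $\Phi_t(x) = x + t X(x)$, set $E_t = \Phi_t(E)$, and let $u_t(y) = u(\Phi_t^{-1}(y))$. Since $X$ has compact support in $B_r(x_0)$, we have $E_t \Delta E \subset\!\subset B_r(x_0)$ and $u_t - u \in W_0^{1,2}(B_r(x_0))$, so the pair $(E_t,u_t)$ is admissible in the definition of $\Lambda$-minimality.

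The $\Lambda$-minimality inequality $\F(E,u;B_r(x_0)) \leq \F(E_t,u_t;B_r(x_0)) + \Lambda|E_t \Delta E|$ will then be differentiated at $t=0$. For the perimeter term, the classical first variation formula gives
\[
\frac{d}{dt}\bigg|_{t=0} P(E_t, B_r(x_0)) = \int_{\partial E} \diver_\tau X \, d\Ha^{n-1}.
\]
For the Dirichlet part, a change of variables $y = \Phi_t(x)$ (noting that $\sigma_{E_t}\circ \Phi_t = \sigma_E$) yields
\[
\int_\Omega \sigma_{E_t}|Du_t|^2 \, dy = \int_\Omega \sigma_E(x)\,\bigl\langle (D\Phi_t)^{-1}(D\Phi_t)^{-T} Du, Du \bigr\rangle \det D\Phi_t \, dx,
\]
and expanding $D\Phi_t = I + tDX$, $\det D\Phi_t = 1 + t \diver X + O(t^2)$, $(D\Phi_t)^{-1} = I - tDX + O(t^2)$ produces the derivative
\[
\int_\Omega \sigma_E\bigl(|Du|^2 \diver X - 2\langle DX\, Du, Du\rangle\bigr)\,dx.
\]
Finally, the symmetric-difference term is controlled by $|E_t \Delta E| \leq |t|\int_{\partial E} |X|\,d\Ha^{n-1} + o(t)$ (via the area formula applied to the normal component of $X$ on $\partial E$, bounded above by $|X|$).

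Dividing the $\Lambda$-minimality inequality by $t>0$ and letting $t\downarrow 0$ gives
\[
-\gamma \int_{\partial E}\diver_\tau X\, d\Ha^{n-1} - \int_\Omega \sigma_E\bigl(|Du|^2 \diver X - 2\langle DX\, Du, Du\rangle\bigr)\,dx \leq \Lambda \int_{\partial E} |X|\, d\Ha^{n-1}.
\]
Since $X$ was arbitrary, replacing $X$ by $-X$ (or equivalently letting $t\uparrow 0$) flips the sign of the left-hand side and yields precisely \eqref{euler21}. The main technical points are ensuring $\Phi_t$ is a diffeomorphism for $|t|$ small (standard, using $\|DX\|_\infty < \infty$) and justifying that all the $O(t^2)$ remainders are genuinely uniform so that the derivative and limit operations are legitimate; these are routine once $X\in C_0^1$ and $u\in W^{1,2}$ are fixed. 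No nontrivial obstacle arises because $\partial E$ has finite $\Ha^{n-1}$-measure in $B_r(x_0)$ by Proposition \ref{density_estimate} together with Theorem \ref{energy_upper}, and hence every boundary integral above is finite.
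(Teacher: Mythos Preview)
Your argument is correct and follows essentially the same route as the paper: deform $(E,u)$ by the flow $\Phi_t$ of $X$, compare via $\Lambda$-minimality, and linearize at $t=0$. The only cosmetic difference is that the paper takes $\Phi_t(x)=x-tX(x)$ (so the desired sign appears directly without replacing $X$ by $-X$) and records the slightly sharper bound $\limsup_{t\to0}|E_t\Delta E|/t\le\int_{\partial E}|X\cdot\nu_E|\,d\Ha^{n-1}$ before weakening to $|X|$.
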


\begin{proof}
The argument is similar to the one  in \cite[Theorem 7.35]{AFP} and therefore we only give the sketch of the proof.  For a given $X \in C_0^1(B_r(x_0); \R^n)$ we set for every small $t >0$,  $\Phi_t(x)= x - tX(x)$,  $E_t = \Phi_t(E)$ and $u_t(y)= u(\Phi_t^{-1}(y))$. From the $\Lambda$-minimality it follows
\[
\gamma [P(E_t,B_r(x_0))- P(E, B_r(x_0)) ]+ \int_{B_r(x_0)} (\sigma_{E_t} |Du_t|^2  -\sigma_{E} |Du|^2) \, dx + \Lambda |E_t \Delta E| \geq 0. 
\]
The conclusion then follows with the same  standard calculations used to derive the first variation of the perimeter and the Dirichlet integral (see \cite[Theorem 7.35]{AFP})  and observing that 
\[
\lim_{t \to 0}\frac{|E_t \Delta E|}{t} \leq  \int_{\partial E} |X\cdot \nu_E| \, d \Ha^{n-1},
\]
see for instance \cite[Theorem 3.2]{JP}.
\end{proof}

We are ready to prove the excess improvement.

\begin{proposition}
\label{flatness_impro}
Let $(E,u)$ be a $\Lambda$-minimizer of $\F$  in $B_r(x_0)$.  For every $\tau \in (0,1/2)$ and $M$ there exists $\eps_5=\eps_5(\tau, M) \in (0,1)$ such that if $x_0 \in \partial E$ and 
\[
\E(x_0,r) \leq \eps_5 \quad \text{and} \quad  \D(x_0,r) + r \leq M \E(x_0,r)
\]
then 
\[
\E(x_0,\tau r) \leq C_5 (\tau^2 \E(x_0,r) + \D(x_0,2\tau r) + \tau r)
\]
for some constant $C_5$ dependening on $\F(E,u; B_r(x_0))$.
\end{proposition}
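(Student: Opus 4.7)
The plan is to argue by contradiction, following the blow-up scheme used to prove the excess decay for $\Lambda$-minimizers of the perimeter (\cite[Theorem~25.1]{M}) adapted to the presence of a Dirichlet term, as in the flatness decay for the Mumford--Shah functional (\cite[Theorem~8.15]{AFP}). First I would assume the conclusion fails: there exist $\tau \in (0,1/2)$, $M>0$, and a sequence of $\Lambda_h$-minimizers $(E_h,u_h)$ in balls $B_{r_h}(x_h)$ with $x_h \in \partial E_h$, $\E_h := \E(x_h,r_h) \to 0$, $\D(x_h,r_h) + r_h \le M\E_h$, but $\E(x_h,\tau r_h) \ge h\,(\tau^2\E_h + \D(x_h,2\tau r_h) + \tau r_h)$ for each $h$. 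Then I would translate to $x_h = 0$, rotate to align the optimal excess direction with $e_n$, and rescale via $E_h \mapsto r_h^{-1}E_h$ and $u_h(y) \mapsto r_h^{-1/2}u_h(r_h y)$, reducing to $x_h=0$, $r_h=1$, $\Lambda_h r_h \to 0$, while keeping $\E_h = \E(0,1,e_n) \to 0$ and $\D(0,1) + r_h \le M\E_h$.

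Since $\E_h \to 0$, Proposition~\ref{lipschitz_approx} provides Lipschitz functions $f_h : \R^{n-1} \to \R$ with $\|f_h\|_\infty \to 0$, $\|Df_h\|_\infty \le 1$, graphs covering $\partial E_h$ in $B_{1/2}$ up to an $\Ha^{n-1}$-set of measure $C_3\E_h$, and $\int_{B^{n-1}_{1/2}}|Df_h|^2\,dx' \le C_3\E_h$. I would then normalise by setting $g_h := f_h/\sqrt{\E_h}$; the family $\{g_h\}$ is then bounded in $W^{1,2}(B^{n-1}_{1/2})$, so up to a subsequence $g_h \rightharpoonup g$ weakly in $W^{1,2}$ and $g_h \to g$ strongly in $L^2$.

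The main step is to show that $g$ is harmonic in $B^{n-1}_{1/2}$. I would plug vector fields of the form $X = \varphi\, e_n$, with $\varphi \in C^1_0(B_{1/2})$, into the weak Euler--Lagrange inequality \eqref{euler21} of Proposition~\ref{euler2}. A first-order Taylor expansion of $\diver_\tau X$ on the graph $\Gamma_{f_h}$, using $\nu = (-Df_h,1)/\sqrt{1+|Df_h|^2}$, gives
\[
\int_{\Gamma_{f_h}} \diver_\tau X \, d\Ha^{n-1} = \int_{B^{n-1}_{1/2}} Df_h \cdot D_{x'}\varphi(x',0)\, dx' + O\!\left(\int_{B^{n-1}_{1/2}} |Df_h|^2\, dx'\right),
\]
while the contribution of the exceptional set $\partial E_h \Delta \Gamma_{f_h}$ is bounded by $C\,\Ha^{n-1}(\partial E_h \Delta \Gamma_{f_h} \cap B_{1/2}) = O(\E_h)$. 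The Dirichlet term in \eqref{euler21} is at most $C\,\D(0,1) = O(\E_h)$ by the standing assumption, and the $\Lambda_h$-remainder is $O(\Lambda_h r_h) = O(\E_h)$. Dividing through by $\gamma\sqrt{\E_h}$ makes every remainder $o(1)$, and the weak $W^{1,2}$-limit will yield
\[
\int_{B^{n-1}_{1/2}} Dg \cdot D_{x'}\varphi(\cdot,0)\, dx' \le 0;
\]
replacing $\varphi$ by $-\varphi$ gives the opposite inequality, hence $\Delta g = 0$.

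To close the argument I would exploit harmonicity: with $L := Dg(0)$ and $c := g(0)$, the classical Taylor decay gives, for each $\tau \in (0,1/4)$,
\[
\frac{1}{(2\tau)^{n+1}} \int_{B^{n-1}_{2\tau}} |g - c - L\cdot x'|^2\, dx' \le C\tau^2 \int_{B^{n-1}_{1/2}} |Dg|^2\, dx'.
\]
Transporting this back to $f_h$ via the $L^2$-convergence $g_h \to g$ and the Lipschitz approximation would bound the quadratic-height integral of $\partial E_h$ over $B_{2\tau}$, taken with respect to the hyperplane of normal $\nu_h := (-\sqrt{\E_h}\,L, 1)/\sqrt{1+\E_h|L|^2}$, by $(C\tau^2 + o(1))\E_h$. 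Feeding this bound into the reverse Poincar\'e inequality of Theorem~\ref{tilt_lemma} at scale $2\tau$ in direction $\nu_h$ (whose smallness hypothesis $\E(0,2\tau,\nu_h)<\eps_4$ holds for large $h$ because $\E_h \to 0$), I would obtain
\[
\E(x_h,\tau r_h) \le \E(x_h,\tau r_h,\nu_h) \le C\bigl((\tau^2 + o(1))\E_h + \D(x_h,2\tau r_h) + 2\tau r_h\bigr),
\]
contradicting the assumed failure for $h$ large. The hard part will be the harmonicity step: it demands careful bookkeeping of three separate error sources -- the quadratic remainder in the expansion of $\diver_\tau X$ on $\Gamma_{f_h}$, the exceptional set $\partial E_h \Delta \Gamma_{f_h}$, and the Dirichlet and $\Lambda$-corrections -- and hinges precisely on the hypothesis $\D + r \le M\E$, which is what forces all three contributions to be $o(\sqrt{\E_h})$ after rescaling and leaves a clean, purely geometric limiting equation for $g$.
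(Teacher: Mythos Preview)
Your approach is essentially the paper's: contradiction, rescaling, Lipschitz approximation, harmonicity of the blow-up limit via the weak Euler--Lagrange equation with a vertical test field, and then the reverse Poincar\'e inequality to close. There is, however, one technical gap that would break the argument as written.

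You set $g_h := f_h/\sqrt{\E_h}$ and claim $\{g_h\}$ is bounded in $W^{1,2}(B^{n-1}_{1/2})$. The gradient bound is fine, but the $L^2$ bound is not: the Lipschitz approximation only gives $\sup|f_h|\le C\,\E_h^{1/(2(n-1))}$, so $\|g_h\|_{L^2}\le C\,\E_h^{1/(2(n-1))-1/2}$, which blows up for $n\ge 3$. The paper cures this by subtracting the mean, setting $g_h:=(f_h-a_h)/\sqrt{\eps_h}$ with $a_h=\fint_{B^{n-1}_{1/2}}f_h$, so that Poincar\'e's inequality provides the missing $L^2$ control from the gradient bound. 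With that correction (and the corresponding adjustment of the affine function in the decay step, which the paper handles by using $(f_h)_{2\tau}$ and $(Df_h)_{2\tau}$ rather than $g(0)$ and $Dg(0)$), your outline matches the paper's proof.
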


\begin{proof}
Without loss of generality we may assume that $\tau < 1/8$.  We argue by contradiction. After performing the same translation and rescaling used in the proof of Lemma~\ref{decay_energy}, we may assume that there exist an infinitesimal sequence $\eps_h$, a sequence $r_h\in\R$ and a sequence $(E_h,u_h)$ of $\Lambda r_h$-minimizers of $\F$ in  $B_1$, with equibounded energies such that, denoting by $\E_h$ the excess of $E_h$ and by $\D_h$ the rescaled Dirichlet integral of $u_{h}$, we have
\begin{equation}\label{flatness2}
\E_h(0,1):=\eps_h, \quad \quad \D_h(0,1) + r_h \leq M \eps_h,
\end{equation}
and
\begin{equation}\label{flatness3}
\E_h(0,\tau) > C_5 (\tau^2 \E_h(0,1) + \D_h(0,2\tau) + \tau r_h)
\end{equation}
Moreover, up to rotating each $E_h$ if necessary we may also assume, that for all $h$
$$
\E_h(0,1)=\E_h(0,1,e_n)=\frac12\int_{\partial E_h \cap B_1}\bigl|\nu_{E_h}(x)-e_n\bigr|^2\,d\Ha^{n-1}.
$$
\par\noindent {\bf Step 1.}
Recalling Proposition~\ref{lipschitz_approx}, we have that for every $h$ sufficiently large there exists a  $1$-Lipschitz function $f_h:\R^{n-1}\to\R$ such that 
\begin{equation}\label{flatness4}
\sup_{\R^{n-1}}|f_h|\leq\eps_h^{\frac{1}{2(n-1)}},\qquad \Ha^{n-1}(\partial E_h\Delta\Gamma_{f_h}\cap B_{1/2})\leq C_3\eps_h,\qquad\int_{B^{n-1}_{1/2}}|Df_h|^2\,dx'\leq C_3\eps_h.
\end{equation}
Therefore, setting
$$
g_h(x'):=\frac{f_h(x')-a_h}{\sqrt{\eps_h}},\qquad \text{where\,\,\,} a_h= \medint_{B_{1/2}^{n-1}}f_h\,dx',
$$
we may assume, up to a not relabelled subsequence, that the functions $g_h$ converge weakly in $H^1(B^{n-1}_{1/2})$ and strongly in $L^2(B^{n-1}_{1/2})$ to a function $g$.

We claim that $g$ is harmonic in $B_{1/2}$. To prove this it is enough to show that for any $\varphi\in C^1_0(B^{n-1}_{1/2})$
\begin{equation}\label{flatness5}
\lim_{h\to\infty}\frac{1}{\sqrt{\eps_h}}\int_{B^{n-1}_{1/2}}Df_h\cdot D\varphi\,dx'=0.
\end{equation}
In order to prove this equality we fix $\delta>0$ so that supp$\,\varphi\times[-2\delta,2\delta]\subset B_{1/2}$, choose a cut-off function $\psi:\R\to[0,1]$ with support in $(-2\delta,2\delta)$ and $\psi\equiv1$ in $[-\delta,\delta]$, and apply to $E_h$ the weak Euler-Lagrange equation \eqref{euler21} with $X=(0,\dots,0,\varphi\psi)$. By the height bound (Lemma~\ref{height_bound})  for $h$ large it holds $\partial E_h\cap B_{1/2}\subset B^{n-1}_{1/2}\times(-\delta,\delta)$. Therefore by  denoting  $\nu'_{E_h}$ the vector made up by the first $n-1$ components of $\nu_{E_h}$, we  have
\[
\begin{split}
-\gamma\int_{\partial E_h\cap B_{1/2}}\nu_{E_h}\cdot e_nD\varphi\cdot\nu'_{E_h}\,d\Ha^{n-1}+\int_{B_{1/2}} \sigma_{E_h}\bigl(|Du_{h}|^2\varphi\psi'- 2D_nu_{h}Du_{h}\cdot D(\varphi\psi)\bigr) \, dx \\
\leq \Lambda r_h\int_{\partial E_h\cap B_{1/2}} |\varphi\psi| \, d \Ha^{n-1}\,.
\end{split}
\]
Thus, using the energy upper bound and recalling the inequality in \eqref{flatness2}, we have
$$
-\gamma\int_{\partial E_h\cap B_{1/2}}\nu_{E_h}\cdot e_nD\varphi\cdot\nu'_{E_h}\,d\Ha^{n-1}\leq C\eps_h
$$
for some constant $C$ depending on $\Lambda$, $\varphi$ and $\psi$, but independent of $h$. Therefore, dividing the above inequality by $\sqrt{\eps_h}$, letting $h\to\infty$ and replacing $\varphi$ by $-\varphi$ we conclude that
\[
\lim_{h\to\infty}\frac{1}{\sqrt{\eps_h}}\int_{\partial E_h\cap B_{1/2}}\nu_{E_h}\cdot e_nD\varphi\cdot\nu'_{E_h}\,d\Ha^{n-1}=0.
\]
From this equation we  get \eqref{flatness5} by observing that
\begin{align*}
&-\int_{\partial E_h\cap B_{1/2}}\nu_{E_h}\cdot e_nD\varphi\cdot\nu'_{E_h}\,d\Ha^{n-1}=-\int_{\Gamma_{f_h}\cap B_{1/2}}\nu_{E_h}\cdot e_nD\varphi\cdot\nu'_{E_h}\,d\Ha^{n-1}\\
&\qquad\qquad -\int_{(\partial E_h\setminus\Gamma_{f_h}) \cap B_{1/2}}\nu_{E_h}\cdot e_nD\varphi\cdot\nu'_{E_h}\,d\Ha^{n-1}+\int_{(\Gamma_{f_h}\setminus\partial E_h) \cap B_{1/2}}\nu_{E_h}\cdot e_nD\varphi\cdot\nu'_{E_h}\,d\Ha^{n-1}.
\end{align*}
Indeed, recalling the second inequality in \eqref{flatness4}, we have that
$$
0=\lim_{h\to\infty}\frac{-1}{\sqrt{\eps_h}}\int_{\Gamma_{f_h}\cap B_{1/2}}\nu_{E_h}\cdot e_nD\varphi\cdot\nu'_{E_h}\,d\Ha^{n-1}=\lim_{h\to\infty}\frac{1}{\sqrt{\eps_h}}\int_{B^{n-1}_{1/2}}\frac{Df_h\cdot D\varphi}{\sqrt{1+|Df_h|^2}}\,dx',
$$
from which \eqref{flatness5} immediately follows using the third inequality in \eqref{flatness4}.
\par\noindent {\bf Step 2.} Since $g$ is harmonic we have for $\tau\in(0,1/8)$ that 
\[
\begin{split}
\int_{B_{2\tau}^{n-1}}|g(x')- g(0)- Dg(0)\cdot x'|^2\,dx'&\leq c(n)\tau^{n+3} \sup_{B_{1/4}}|D^2g|^2\\
&\leq c(n)\tau^{n+3} \int_{B_{1/2}^{n-1}}|Dg|^2\,dx'.
\end{split}
\]
 Since by  \eqref{flatness4} we have that
$$
\int_{B_{1/2}^{n-1}}|Dg|^2\,dx\leq\liminf_{h\to\infty}\int_{B_{1/2}^{n-1}}|Dg_h|^2\,dx\leq C_3,
$$
and by the mean value property  $(g)_r:=\medintdue_{B_r^{n-1}}g dx'= g(0)$  and $(Dg)_r= Dg(0)$, we may conclude that
$$
\lim_{h\to\infty}\int_{B_{2\tau}^{n-1}}|g_h(x')-(g_h)_{2\tau}-(Dg_h)_{2\tau}\cdot x'|^2\,dx'=\int_{B_{2\tau}^{n-1}}|g(x')-(g)_{2\tau}-(Dg)_{2\tau}\cdot x'|^2\,dx'\leq \widehat C\tau^{n+3}\,,
$$
for some constant $\widehat C$ depending only on $C_3$ and $n$. In turn, recalling the definition of $g_h$, this inequality is equivalent to
$$
\lim_{h\to\infty}\frac{1}{\eps_h}\int_{B_{2\tau}^{n-1}}|f_h(x')-(f_h)_{2\tau}-(Df_h)_{2\tau}\cdot x'|^2\,dx'\leq \widehat C\tau^{n+3}.
$$
From this inequality, recalling that $|Df_h|\leq1$  and setting 
$$
c_h:=\frac{(f_h)_{2\tau}}{\sqrt{1+|(Df_h)_{2\tau}|^2}},\qquad\nu_h=\frac{(-(Df_h)_{2\tau},1)}{\sqrt{1+|(Df_h)_{2\tau}|^2}},
$$
we easily have
\begin{align*}
&\limsup_{h\to\infty}\frac{1}{\eps_h}\int_{\partial E_h\cap\Gamma_{f_h}\cap B_{2\tau}}|\nu_h\cdot x-c_h|^2\,d\Ha^{n-1}\\
&\qquad\qquad\qquad\leq\lim_{h\to\infty}\frac{\sqrt{2}}{\eps_h}\int_{B_{2\tau}^{n-1}}|f_h(x')-(f_h)_{2\tau}-(Df_h)_{2\tau}\cdot x'|^2\,dx'\leq\sqrt{2}\widehat C\tau^{n+3}.
\end{align*}
On the other hand, arguing as in Step 1, we immediately get from the height bound  and from the first two inequalities in \eqref{flatness4} that 
$$
\lim_{h\to\infty}\frac{1}{\eps_h}\int_{(\partial E_h\setminus\Gamma_{f_h})\cap B_{2\tau}}|\nu_h\cdot x-c_h|^2\,d\Ha^{n-1}=0.
$$
Hence, we conclude that
\begin{equation} \label{contradiction1}
\limsup_{h\to\infty}\frac{1}{\eps_h}\int_{\partial E_h\cap B_{2\tau}}|\nu_h\cdot x-c_h|^2\,d\Ha^{n-1}
\leq\sqrt{2}\widehat C\tau^{n+3}.
\end{equation}

Note that 
\[
\begin{split}
\int_{\partial E_h \cap B_{2 \tau}} |\nu_{E_h} - \nu_h|^2\, d \Ha^{n-1} &\leq  2\int_{\partial E \cap B_{2 \tau}} |\nu_{E_h} - e_n |^2\, d \Ha^{n-1} + 2 |e_n - \nu_h|^2 \Ha^{n-1}(\partial E_h \cap B_{2 \tau})\\
&\leq 4 \eps_h+ C \int_{B_{1/2}^{n-1}} |Df_h|^2 \, dx' \leq C \eps_h
\end{split}
\]
by the third inequality from \eqref{flatness4}. In particular, this shows that $\E_h(0, 2 \tau, \nu_h) \to 0 $ as $h \to \infty$. Therefore applying Theorem \ref{tilt_lemma} and \eqref{contradiction1} we have for $h$ large  that 
\[
\begin{split}
\E_h(0,\tau) \leq \E_h(0,\tau,\nu_h) \leq C_4(\hat{C} \tau^2 \E_h(0,1) + D_h(0,2\tau) + 2\tau r_h )
\end{split} 
\]
which is  a contradiction to \eqref{flatness3} if we choose $C_5 > C_4 \max\{\hat{C},2\}$.
\end{proof}

Finally we give the proof of the regularity theorem (Theorem \ref{mainthm1}).

\begin{proof}[Proof of Theorem \ref{mainthm1}]
\par\noindent {\bf Step 1.} We begin by proving that for every $\tau \in (0,1)$ there exists $\eps_6 = \eps_6(\tau) >0$ such that if $\E(x,r) \leq \eps_6$ then 
\[
\D(x,\tau r) \leq C_0 \tau \D(x, r)
\]
where $C_0$  is from Proposition \ref{decay_est}. We argue by contradiction. After performing the same translation and rescaling used in the proof of Lemma~\ref{decay_energy} we may assume that there exist  sequences $\eps_h, r_h>0$ and a sequence $E_h$ of $\Lambda r_h$-minimizers of $\F$ in $B_1$ with equibounded energies, such that, denoting by $\E_h$ the excess of $E_h$ and by $\D_h$ the rescaled Dirichlet integral of $u_{E_h}$, we have that $0 \in \partial E_h$,
\begin{equation} \label{contradiction_main}
\E_h(0,1) = \eps_h \to 0 \qquad \text{and} \qquad \D_h(0,\tau) > C_0 \tau\D_h(0,1).
\end{equation}
By the energy upper bound (Theorem \ref{energy_upper}) and the compactness lemma (Lemma \ref{compactness})  we may assume that $E_h \to E$ in $L^1(B_1)$ and $0 \in \partial E$. Moreover by the lower semicontinuity of the excess  we have that $\E(0,1) = 0$ where $\E(0,1)$ is the excess of $E$ at $0$. Thus it follows that $E$ is a half space, say $H$, in $B_1$, see \cite[Proposition 22.2]{M}. In particular, for $h$ large it holds
\[
|(E_h \Delta H) \cap B_1| < \eps_0(\tau) |B_1|
\]
where $\eps_0$ is from Proposition \ref{decay_est} which  gives a contradiction with the inequality in  \eqref{contradiction_main}.

\par\noindent {\bf Step 2.} Let $U \subset \! \subset \Omega$ be an open set. We show that for every $\tau \in (0,1)$ there exists $\eps = \eps(\tau, U)>0$ such that  if $x_0 \in \partial E$, $B_r(x_0)\subset U$ and  $\E(x_0,r) + \D(x_0,r) +r < \eps$ then 
\begin{equation}
\label{main_step2}
\E(x_0,\tau r) + \D(x_0,\tau  r) +\tau r \leq  C_6\tau (\E(x_0,r) + \D(x_0,r) +r). 
\end{equation}
First of all, if $\eps \leq \eps_6(\tau)$, then Step 1 implies 
\begin{equation} \label{main_step_2.2}
\D(x_0, \tau r) \leq  C_0 \tau\D(x_0,r).
\end{equation}
In order to prove \eqref{main_step2} we may assume that $\tau <1/2$.  Assume first  that  $D(x_0,r) + r \leq  \tau^{-n} \E(x_0,r)$. Then if  $ \E(x_0,r) < \min\{\eps_5(\tau,M), \eps_6(2\tau)\}$, for $M= \tau^{-n}$, it follows from Proposition \ref{flatness_impro} that 
\[
\begin{split}
\E(x_0,\tau r) &\leq C_5 (\tau^2 \E(x_0,r) + \D(x_0,2\tau r) + \tau r)\\
&\leq  C_5 (\tau^2 \E(x_0,r) + 2C_0 \tau \D(x_0, r) + \tau r)
\end{split}
\]
where the last inequality follows from \eqref{main_step_2.2} applied to $2\tau$. On the other hand if $\E(x_0,r) \leq \tau^n( D(x_0,r) + r)$ we immediately obtain
\[
\E(x_0,\tau r) \leq \tau^{1-n}  \E(x_0,r) \leq \tau( D(x_0,r) + r).
\]
Therefore \eqref{main_step_2.2} implies  \eqref{main_step2} by choosing $\eps = \min\{\eps_5(\tau,M), \eps_6(2\tau), \eps_6(\tau)\}$.

\par\noindent {\bf Step 3.} Let us fix $\sigma \in (0,1/2)$. We choose $\tau_0 \in (0,1)$ such that $C_6\tau_0 \leq \tau_0^{2\sigma}$ where $C_6$ is the constant in   \eqref{main_step2}. Let $U \subset \! \subset \Omega$ be an open set. We define
\[
\Gamma \cap U = \{ x \in \partial E \cap U \,: \, \E(x,r) + \D(x,r) +r < \eps(\tau_0, U) \,\,\text{for some }\, r>0 \,\, \text{such that  } \, B_r(x)\subset U\}
\]
where $\eps(\tau_0)$ is from Step 2. Note that $\Gamma \cap U$ is a relatively open in $\partial E$. We show that  $\Gamma \cap U$ is $C^{1,\sigma}$-hypersurface.

Indeed \eqref{main_step2} implies  via  standard iteration argument that  if $x_0 \in \Gamma \cap U$ there exist  $r_0>0$ and a neighborhood $V$ of $x_0$ such that for every $x \in \partial E \cap V$ it holds
\[
 \E(x,\tau_0^k r_0)+ \D(x,\tau_0^k r_0) + \tau_0^k  r_0 \leq \tau_0^{2 \sigma k} \qquad \text{for }\, k = 0,1,2,\dots.
\]
In particular
\[
 \E(x,\tau_0^k r_0)\leq \tau_0^{2 \sigma k}.
\]
From this estimate and the density lower bound, arguing exactly as in  \cite[Theorem 8.2]{Giusti2}, we obtain that for every $x \in \partial E \cap V$ and $0< s < t < r_0$ it holds
\begin{equation} \label{from_giusti}
|(\nu_E)_s(x)- (\nu_E)_t(x)| \leq c t^\sigma 
\end{equation}
for a constant $c$ depending on $\tau_0, r_0$ and $n$. Here  
\[
(\nu_E)_t(x) = \medint_{\partial E \cap B_t(x)} \nu_E(y)\, d \Ha^{n-1}.
\]
The estimate \eqref{from_giusti} first implies that $\Gamma \cap U$ is $C^1$ (see for instance \cite[Theorem 8.4]{Giusti2}). By a standard argument we then deduce again from \eqref{from_giusti} that  $\Gamma \cap U$ is $C^{1,\sigma}$-hypersurface. Finally  we define $\Gamma := \cup_i \, (\Gamma \cap U_i)$ where $(U_i)$ is an increasing sequence of open sets such that $U_i \subset\! \subset \Omega$ and $\Omega = \cup_i  U_i$

\par\noindent {\bf Step 4.}  Finally we prove that  exists $\eta>0$, depending only on $\frac{\beta}{\alpha}$ and $n$, such that 
\[
\Ha^{n-1-\eta}(\partial E \setminus \Gamma) = 0.
\]
Since the argument is fairly standard we only give the sketch of the proof,  see e.g. Section 5 in \cite{AFH},  \cite{DFR} and \cite{DF}.  We set 
\[
\Sigma = \{ x \in \partial E \setminus \Gamma\, : \, \lim_{r \to 0} D(x,r) =0 \}. 
\] 
Since by Lemma \ref{gehring} $Du \in L_{loc}^{2p}(\Omega)$ for some $p>1$, depending only on $\frac{\beta}{\alpha}$ and $n$, we have that 
\[
\dim_{\Ha}\Big( \{ x \in \Omega \, : \, \limsup_{r \to 0} D(x,r) >0 \}\Big)\leq n-p,
\]
where $\dim_{\Ha}$ denotes the Hausdorff dimension. The conclusion will follow if we show that $ \Sigma = \emptyset$ when $n\leq 7$ and $\dim_{\Ha}( \Sigma) \leq n-8$ otherwise.

Let us first treat the case $n \leq 7$. We argue by contradiction and assume, up to a translation, that $0 \in \Sigma$. Let us take a sequence $r_h \to 0$ and set $E_h = \frac{E}{r_h}$ and $u_h(x) = r_h^{-1/2} u(r_h x)$. Then $(E_h,u_h)$ is $\Lambda r_h$-minimizer of $\F$. Since $Du_h \to 0$ in $L^2(B_1)$  Lemma \ref{compactness} implies that, up to a subsequence, $E_h$ converges to a minimizer of the perimeter $E_\infty$ and moreover $\lim_{h \to \infty} P(E_h,U) = P(E_\infty,U)$ for every open set $U \subset B_1$ and $0 \in \partial E_\infty$.  Since $n \leq 7$, we know that $\partial E_\infty$ is a smooth manifold. In particular, for any $\eps>0$  there exists $r>0$ such that $\E(0, E_\infty,r)<\eps$. However the above convergence of the perimeter implies that $\E(0,E_h, r_h) < \eps$ when $h$ is large enough. By the definition of $\Gamma$ this contradicts the fact that $0 \in \Sigma$. 

In the case  $n \geq 8$ we claim that if $s > n-8$ then it holds $\Ha^{s}(\Sigma) = 0$. The proof of this can be achieved arguing exactly as in the proof of  \cite[Theorem 5.6]{AFH}.
\end{proof}

\section{Appendix}

 We conclude by  going through the proof of Lemma \ref{gehring} and estimate all the relevant constants  in the statement.

\begin{proof}[Proof of Lemma \ref{gehring}]
Without loss of generality we may assume that $Q_{R}(x_0)$ is the unit cube $Q$. Denote  $d(x)= \dist (x, \partial Q)$ and define
\[
\mathcal{C}_k = \{ x \in  Q \mid \frac{3}{4} 2^{-k-1} \leq d(x) \leq \frac{3}{4} 2^{-k} \}.
\]
Each $\mathcal{C}_k$ can be divided into cubes of side $\frac{3}{4} 2^{-k-1}$. We call this collection $\mathcal{G}_k$. By Lemma \ref{caccioppoli} we have for $F(x)= d(x)^n |Du(x)|^2$ that  
\[
\medint_{P} F \, dx \leq  C_0 \, \left(\medint_{\tilde{P}} F^{m} \, dx \right)^{\frac{1}{m}},
\]
where $\tilde{P}$ is the concentric cube to $P \subset \mathcal{C}_k$ or $P \subset Q_{1/4}$, for a constant $C_0 = 4^n C$, where $C$ is the constant from Lemma \ref{caccioppoli}.

Denote next $\Phi_t = \{ x \in Q \mid F(x)> t\}$, where $t > a := \medintdue_{Q} |Du|^2\, dx$.  Applying Calder\'on-Zygmund decomposition we obtain (in the proof \cite[Lemma 6.2]{Giusti} choose $\lambda = 2^{1/m} C_0$)
\[
\int_{\Phi_t} F \, dx \leq C_1 t^{1-m} \int_{\Phi_t} F^m \, dx
\]
for
\[
C_1 = 5^n 2^n \lambda =  2^{1/m} 10^n C_0 \leq 4\cdot  40^n \, C = C_{S,n}^2 \, 2^{10}\cdot  80^n  \frac{\beta}{\alpha}.
\]

The result of \cite[Proposition 6.1]{Giusti} now follows with the constants $A= C_1$ and $r= p>1$ such that 
\[
C_1(p-1) = \frac{p-m}{2}
\] 
that is
\[
p = \frac{2C_1 - m}{2C_1 -1}.
\]
This leads to the  inequality 
\[
\int_Q F^p \, dx \leq 2 a^{p-1} \int_Q F\, dx
\]
for $a = \medintdue_Q |Du|^2\, dx$. Recalling the definition of $F$ we finally obtain
\begin{equation}
\label{cubes}
 \medint_{Q_{1/2}}|Du|^{2p} \, dx \leq 2^{n+ pn+1}  \left( \medint_{Q}|Du|^2 \, dx \right)^p .
\end{equation}

Let $B_1 \subset \! \subset \Omega$. Observe that for any integer $h>1$,   $Q_{1/2}$ can be covered by $h^n$ cubes of side length  $1/h$. Hence, $B_{1/2}$ can be covered by $N_h$ cubes $Q_{1/2h}(x_i)$ having non-empty intersection with $B_{1/2}$ and  $N_h \leq h^n$. Using the rescaled analogue of the  inequality \eqref{cubes} we get
\[
\begin{split}
 \medint_{B_{1/2}}|Du|^{2p} \, dx &\leq \frac{2^n }{\omega_nh^n} \sum_{i=1}^{N_h}  \medint_{Q_{1/2h}(x_i)}|Du|^{2p} \, dx \\
&\leq  2^{n+ pn+1} \frac{2^n }{\omega_nh^n}  \sum_{i=1}^{N_h}  \left( \medint_{Q_{1/h}(x_i)}|Du|^{2} \, dx \right)^p \\
&\leq 2^{n+ pn+1} \frac{2^n }{\omega_nh^n}  \sum_{i=1}^{N_h}  \left( \frac{h^n \omega_n}{2^n}\medint_{B_1}|Du|^{2} \, dx \right)^p 
\end{split}
\]
provided $h > 4 \sqrt{n}$, in which case $Q_{1/h}(x_i) \subset B_1$ for every $i=1, \dots, N_h$. We may choose $h \leq 5 \sqrt{n}$ and thus we get
\[
 \medint_{B_{1/2}}|Du|^{2p} \, dx \leq   2^{2n+1} 5^{np} n^{np/2} \omega_n^{p-1} \left(\medint_{B_1}|Du|^{2} \, dx \right)^p. 
\]
\end{proof}

\section*{Acknowledgment}
The research of NF was partially supported by the 2008 ERC Advanced Grant 226234 ``Analytic Techniques for
Geometric and Functional Inequalities'' and by the 2013 FiDiPro project ``Quantitative Isoperimetric Inequalities''. The research of VJ was  supported by the  Academy of Finland grant 268393. The authors want also to thank R. Kohn for pointing out to them the phenomenon of Taylor cones and G. De Philippis and A. Figalli for finding a mistake in the first version of the paper.

\end{document}